    \newcommand{\href}[2]{#2}
\newtheorem{theorem}{Theorem}[section]
\newtheorem{corollary}[theorem]{Corollary}
\newtheorem{lemma}[theorem]{Lemma}
\newtheorem{remark}[theorem]{Remark}
\numberwithin{equation}{section}  %amsmath command: tie counter to section
  \newcounter{mnote}
  \let\oldmarginpar\marginpar
    \renewcommand\marginpar[1]{\-\oldmarginpar[\raggedleft\footnotesize #1]%
    {\raggedright\footnotesize #1}}
\newcommand{\B}{{\mathbb B}}
\newcommand{\R}{{\mathbb R}}       % Real numbers.
\renewcommand{\P}{{\mathbb P}}
\newcommand{\cB}{{\mathcal B}}
\newcommand{\cJ}{{\mathcal J}}
\newcommand{\cK}{{\mathcal K}}
\newcommand{\cR}{{\mathcal R}}
\newcommand{\cT}{{\mathcal T}}
\newcommand{\bs}{\boldsymbol}
\begin{document}

\title[Auxiliary space preconditioners for VEMs]
{Auxiliary space preconditioners for virtual element methods on polytopal meshes}

\author[Y. Zhu]{Yunrong Zhu}
\email{zhuyunr@isu.edu}

\address{Department of Mathematics \& Statistics\\
         Idaho State University\\ 
         Pocatello, ID 83209-8085}

\thanks{YZ was supported in part by NSF DMS 1319110.}

\date{\today}

\keywords{Virtual element methods, polytopal mesh, auxiliary space preconditioner, fictitious space preconditioner, jump coefficients}

\begin{abstract}
	 In this paper, we develop the auxiliary space preconditioners for solving the linear system arising from the virtual element methods discretization on polytopal meshes for the second order elliptic equations. The preconditioners are constructed based on an auxiliary simplicial mesh. The condition numbers of the preconditioned systems are uniformly bounded, independent of the problem size and the jump in coefficients. Several numerical experiments are presented to demonstrate the performance of the preconditioners.
\end{abstract}

\maketitle

%\clearpage

%\vspace*{1.2cm}
%%{\footnotesize
%\tableofcontents
%%}
%\vspace*{-0.7cm}
%
%\clearpage

\section{Introduction}
In this paper, we present the auxiliary space preconditioning techniques (\cite{Xu.J1996}) for the linear system arising from virtual element methods (VEMs) discretization on polytopal mesh for the following second order elliptic problems with jump coefficients: 
\begin{equation} \label{eqn:model}
	-\nabla \cdot (\kappa\nabla u) =f \mbox{  in  } \Omega, \qquad u|_{\partial \Omega} =0.
\end {equation}
Here, $\Omega\subset \R^{d}$ ($d=2, 3$) is an open polygonal domain and $f\in L^2(\Omega)$. We assume that the diffusion coefficient $\kappa \in L^{\infty}(\Omega)$ is piecewise constant with respect to an initial polytopal partition $\cT_{0}$ of $\Omega$ but may have large jumps across the interface of the partition.  

Recently introduced in \cite{Beirao-da-Veiga.L;Brezzi.F;Cangiani.A;Manzini.G2013}, the VEMs have been widely used in the numerical approximation of various partial differential equations (PDEs) (see for example \cite{Antonietti.P;Veiga.L;Scacchi.S;Verani.M2016,Beirao-da-Veiga.L;Brezzi.F;Marini.L;Russo.A2016,Cangiani.A;Chatzipantelidis.P;Diwan.G;Georgoulis.E2017,Chi.H;Veiga.L;Paulino.G2017}). The VEMs are generalizations of the classical finite element methods (FEMs), which permit the use of general polygonal and polyhedral meshes. Using polytopal meshes allows for more flexibility in dealing with complex computational domains or interfaces (cf. \cite{Chen.L;Wei.H;Wen.M2017}). Some error estimates of the VEM are recently given in \cite{Brenner.S;Guan.Q;Sung.L2017,Chen.L;Huang.J2018}. For practical implementation of the the VEM for second order elliptic equations, we refer to \cite{Beirao-da-Veiga.L;Brezzi.F;Marini.L;Russo.A2014,Sutton.O2017}. 

On the other hand, little work has been devoted to design efficient solvers for the VEM. In \cite{Bertoluzza.S;Pennacchio.M;Prada.D2017}, the authors developed and analyzed balancing domain decomposition preconditioners for elliptic problems discretized by the virtual element method.  In \cite{Antonietti.P;Mascotto.L;Verani.M2017}, the authors presented a $p$-version multigrid algorithm for the VEM discretization of 2D Poisson problem. The coarse spaces are constructed by decreasing progressively the polynomial degree of the VEM space. It was shown that the multigrid algorithm converges uniformly. 

In order to design an efficient, robust and easily implementable preconditioner for VEM on polytopal mesh, we use the general auxiliary space preconditioning framework developed in \cite{Xu.J1996}. This framework allows us to construct preconditioners in the lack of the hierarchical subspaces that is required in the classical (geometric) multilevel methods. The prominent examples of this framework include \cite{Hiptmair.R;Xu.J2007} for the Maxwell's equations and \cite{Chen.L;Wang.J;Wang.Y;Ye.X2015} for the weak Galerkin method on simplicial mesh.  The preconditioners we will develop here can be understood as two-level algorithms. 
%The idea is to use a standard conforming $\P_{1}$ finite element discretization on an ``auxiliary'' simplicial mesh as a ``coarse'' space.  
The ``fine'' level problem is the VEM discretization on general polytopal mesh, and the ``coarse'' level problem is the standard conforming $\P_{1}$ finite element space on an auxiliary simplicial mesh. It is natural to choose the standard $\P_{1}$ finite element spaces as the coarse space for a couple of reasons: (1) the degrees of freedom in the coarse space are included in the VEM space -- so asymptotically, the solutions on the coarse and fine spaces should be close to each other; (2) there is a lot of literature on developing efficient (and robust) solvers for the standard conforming $\P_{1}$ finite element discretization, so we can use any existing solvers/preconditioners as a coarse solver. 

In this paper, we show that the auxiliary space preconditioners are robust with respect to both mesh size and the jumps in the coefficient. In addition, we show that the \emph{fictitious space preconditioner} using the conforming $\P_{1}$ finite element space as the fictitious space also provides an efficient and robust preconditioner for the linear VEM discretization. We also remark that these preconditioners are very easy to implement. In particular, the matrix form of the intergrid transfer operator between the linear VEM and the coarse space is just the identity matrix.

The rest of this paper is organized as follows. In Section~\ref{sec:vem}, we give basic notation and the virtual element discretization. Then in Section~\ref{sec:aux}, we present the auxiliary space preconditioners and discuss its convergence. In Section~\ref{sec:num}, we present several numerical experiments in both 2D and 3D to verify the theoretical results. Finally, in Section~\ref{sec:con} we summarize the results and discuss some possible extensions of these results. Throughout the paper, we will use the notation $x_1\lesssim y_1$, and $x_2\gtrsim y_2$,
whenever there exist constants $C_1, C_2$ independent of the mesh size
$h$ and the coefficient $\kappa$ or other parameters that $x_1$,
$x_2$, $y_1$ and $y_2$ may depend on, and such that $x_1 \le C_1 y_1$
and $x_2\ge C_2 y_2$.

\section{Virtual Element Methods}
\label{sec:vem}
Given any subset $G\subset \R^{d}$, we use standard notation $L^{2}(G)$ for the set of square integrable functions, with the inner product $(\cdot, \cdot)$ and the induced norm $\|\cdot\|_{0,G}$. We use the standard definition and notation of Sobolev spaces  $H^{1}(G)$ and $H_{0}^{1}(G)$. The associated inner product, norm, and semi-norm in $H^{1}(G)$ are denoted by $(\cdot, \cdot)_{1, G}$, $\|\cdot\|_{1,G}$, and $|\cdot|_{1,G}$, respectively. We will also use $(\cdot, \cdot)_{0, \kappa, G}$ to denote the weighted $L^{2}$-inner product
\[
	(u, v)_{0,\kappa, G}:=\int_{G} \kappa(\bs x) u v d\bs x,
\] and $\|\cdot\|_{0, \kappa, G}$ as the weighted $L^{2}$-norm. For simplicity, when $G=\Omega$, we omit it from the norms (or the inner products).  %Let $\Omega\subset \R^{d}$ ($d=2,3$) be a polygonal domain. 
The variational formulation of the equation \eqref{eqn:model} reads: find $u\in V := H_{0}^{1}(\Omega)$ such that 
\begin{align}
\label{eqn:weak}
	a(u,v) :=\int_{\Omega} \kappa \nabla u \cdot \nabla v dx = (f, v), \qquad \forall v\in V.
\end{align}

Let $\cT_{h}$ be a family of partitions of $\Omega$ into non-overlapping simple polytopal elements $K$. % A \emph{simple polygon} is an open simply connected set whose boundary is a non-intersecting line consisting a finite number of straight line segments. 
Here the subscript $h= h(K)$ denotes the diameter of the element $K$.  Following \cite{Chen.L;Huang.J2018}, we make the following assumption on the  polytopal mesh:
\begin{enumerate}
	\item[({\bf A})] Each polytopal element $K\in \cT_{h}$ has a ``virtual triangulation'' $\cT_{K}$ of $K$ such that $\cT_{K}$ is uniformly shape regular and quasi-uniform. Each edge of $K$ is an edge of certain element in $\cT_{K}$. 
\end{enumerate}
Let $k\ge 1$ be an integer, and $\P_{l}(D)$ be the polynomial of degree less than or equal to $l$ on the domain $D$. For convenience, we denote $\P_{-1}(D) =\{0\}.$ On each polytopal element $K\in \cT_{h}$, we define the local virtual finite element space: 
\[
	V_{h}^{K} :=\{ v\in H^{1}(K) \; : \; v|_{\partial K} \in \B_{k}(\partial K),\; \Delta v \in \P_{k-2}(K)\}, 
\]
where $\B_{k}(\partial K):= \{ v\in C^{0}(\partial K): v|_{e} \in \P_{k}(e), \; \forall e\subset \partial K\}.$
It is clear that $V_{h}^{K} \supset \P_{k}(K)$, and it may implicitly contain some other non-polynomial functions. 
The global virtual element space $V_{h}$ is then defined as: 
\[
	V_{h} := \{v\in V  \; : \; v|_{K} \in V_{h}^{K}, \;\; \forall K\in \cT_{h}\}. 
\]
In the 2D case, any function $v\in V_{h}$ can be uniquely determined by the following degrees of freedom: 
\begin{itemize}
	\item the function values of $v$ at the vertices of $\cT_{h}$;
	\item the function values of $v$ at the $k-1$ internal points of the $(k+1)$ Gauss-Lobatto quadrature rule on each edge $e$;
	\item the moments up to order $k-2$ of $v$ on each element $K\in \cT_{h}$.
\end{itemize}
For a more detailed construction of the $V_{h}$ in both 2D and 3D, we refer to \cite{Beirao-da-Veiga.L;Brezzi.F;Marini.L;Russo.A2014}. 
The following \emph{inverse inequality} was shown in \cite[Theorem 3.6]{Chen.L;Huang.J2018} (see also \cite[Lemma 2.19]{Brenner.S;Guan.Q;Sung.L2017}). 
\begin{lemma}
\label{lem:inverse}
	There exists a constant $C$ depending only on the shape regularity and quasi-uniformity of $\cT_{K}$ such that 
	$$
		\|\nabla v\|_{0,K} \le Ch_{K}^{-1} \|v\|_{0,K}, \qquad \forall v \in V_{h}^{K}.
	$$
\end{lemma}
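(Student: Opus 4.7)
My plan is to reduce the claim to a scale-invariant statement on a unit-diameter configuration and then close it via a finite-dimensional argument whose constants depend only on the shape regularity of the virtual triangulation $\cT_K$ from Assumption (A).

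First I would dilate to $\hat K = K/h_K$ and the rescaled function $\hat v(\hat x)=v(h_K\hat x)$, so that the asserted inequality becomes the scale-invariant bound $\|\hat\nabla\hat v\|_{0,\hat K}\lesssim\|\hat v\|_{0,\hat K}$; the rescaled subdivision $\hat\cT_{\hat K}$ inherits the shape-regularity and quasi-uniformity constants of $\cT_K$. To exploit the structural description of $V_h^{\hat K}$, I would decompose $\hat v=\hat v_0+\hat v_b$, where $\hat v_0\in H_0^1(\hat K)$ satisfies $\Delta\hat v_0=\Delta\hat v\in\P_{k-2}(\hat K)$ and $\hat v_b$ is the harmonic extension of $\hat v|_{\partial\hat K}\in\B_k(\partial\hat K)$. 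The two pieces are $H^1$-orthogonal, so it suffices to bound each separately (together with an $L^2$-stability estimate $\|\hat v_0\|_{0,\hat K}+\|\hat v_b\|_{0,\hat K}\lesssim\|\hat v\|_{0,\hat K}$).

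For the interior piece, integration by parts gives
$$
\|\hat\nabla\hat v_0\|_{0,\hat K}^2=-(\Delta\hat v,\hat v_0)_{\hat K}\le\|\Delta\hat v\|_{0,\hat K}\|\hat v_0\|_{0,\hat K},
$$
and combined with a Poincar\'e inequality on $\hat K$ (proved sub-simplex by sub-simplex on $\hat\cT_{\hat K}$), this reduces the problem to bounding the polynomial $\Delta\hat v\in\P_{k-2}(\hat K)$ in terms of $\hat v$, which follows from the duality between the interior moment degrees of freedom of $v$ and $\P_{k-2}(K)$. For the harmonic piece $\hat v_b$, the identity $\|\hat\nabla\hat v_b\|_{0,\hat K}^2=\langle\partial_n\hat v_b,\hat v_b\rangle_{\partial\hat K}$, paired with a trace inequality on $\hat\cT_{\hat K}$ and a boundary polynomial inverse estimate $\|\partial_n\hat v_b\|_{0,\partial\hat K}\lesssim\|\hat v_b\|_{0,\partial\hat K}$ valid on the piecewise polynomial trace $\hat v_b|_{\partial\hat K}\in\B_k(\partial\hat K)$, yields the desired bound after absorbing the gradient term.

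The main obstacle is obtaining constants that are uniform over the whole family of admissible polytopes. Unlike classical FEM, there is no single reference element---the shape of $K$ varies across the mesh---so the uniform constant must come entirely from Assumption (A). The resolution is that every geometric inequality invoked above (Poincar\'e, trace, polynomial inverse, harmonic-extension stability) can be proved sub-simplex by sub-simplex on the virtual triangulation $\cT_K$ and then summed, so the final constant depends only on the shape-regularity and quasi-uniformity constants of $\cT_K$, as required.
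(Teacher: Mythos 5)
First, a point of reference: the paper does not prove this lemma at all --- it is imported verbatim from the cited references (Chen--Huang, Thm.~3.6; Brenner--Guan--Sung, Lem.~2.19), so there is no internal proof to compare against. Your overall strategy --- rescale to unit diameter, split $\hat v=\hat v_0+\hat v_b$ into the $H^1_0$ part carrying $\Delta\hat v\in\P_{k-2}$ and the harmonic extension of the trace, and treat the two $H^1$-orthogonal pieces separately --- is indeed the standard route taken in those references. The interior piece is handled essentially correctly, although the assertion that $\|\Delta\hat v\|_{0,\hat K}\lesssim\|\hat v\|_{0,\hat K}$ ``follows from the duality with the interior moment degrees of freedom'' is too quick: pairing $\Delta v$ with a polynomial and integrating by parts reintroduces $\nabla v$ and $\partial_n v$, so one needs the bubble-function duality $(\Delta v, b^2q)_K=(v,\Delta(b^2q))_K+\cdots$ built on $\cT_K$ (vacuous for $k=1$, which is the case the paper actually uses).

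The genuine gap is in the harmonic piece. You invoke ``a boundary polynomial inverse estimate $\|\partial_n\hat v_b\|_{0,\partial\hat K}\lesssim\|\hat v_b\|_{0,\partial\hat K}$ valid on the piecewise polynomial trace.'' But $\partial_n\hat v_b$ is \emph{not} a piecewise polynomial on $\partial\hat K$: it is the Dirichlet-to-Neumann image of the trace, determined by the harmonic extension into all of $\hat K$, so no finite-dimensional inverse estimate on $\B_k(\partial\hat K)$ applies to it. Moreover --- and this undercuts your closing paragraph --- this bound cannot be ``proved sub-simplex by sub-simplex and summed'': the value of $\partial_n\hat v_b$ on one face depends on the data on all of $\partial\hat K$ and on the global shape of $K$, and its uniform $L^2(\partial\hat K)$ control over the admissible family would require Rellich/Jerison--Kenig type estimates with constants tied to the Lipschitz character of $K$, a far heavier ingredient than Assumption ({\bf A}). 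The standard repair avoids $\|\partial_n\hat v_b\|_{0,\partial\hat K}$ altogether: estimate $\langle\partial_n\hat v_b,\hat v_b\rangle_{\partial\hat K}\le\|\partial_n\hat v_b\|_{-1/2,\partial\hat K}\,\|\hat v_b\|_{1/2,\partial\hat K}\lesssim|\hat v_b|_{1,\hat K}\,\|\hat v_b\|_{1/2,\partial\hat K}$ (equivalently, use that the harmonic extension has minimal energy), then apply the genuinely local, edge-by-edge inverse estimate $\|g\|_{1/2,\partial\hat K}\lesssim\|g\|_{0,\partial\hat K}$ for $g\in\B_k(\partial\hat K)$, followed by the multiplicative trace inequality $\|\hat v\|_{0,\partial\hat K}^2\lesssim\|\hat v\|_{0,\hat K}^2+\|\hat v\|_{0,\hat K}|\hat v|_{1,\hat K}$ and a Young absorption of the gradient term. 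With that substitution your argument closes; as written, the key step is unjustified.
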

The VEM discretization is designed in such as way that the ``virtual'' basis functions are not necessary to be computed explicitly. Instead, they are understood through the degrees of freedom. Although the ``virtual'' functions $v\in V_{h}^{K}$ are defined implicitly, we can compute their Galerkin projection $\Pi_{h}^{\nabla}:V_{h}^{K} \to \P_{k}(K)$ onto the polynomial space $\P_{k}(K)$. That is, 
\[
	(\nabla \Pi_{h}^{\nabla} v, \nabla p)_{K} = (\nabla v, \nabla p)_{K}, \qquad \forall p\in \P_{k}(K).
\]
$\Pi_{h}^{\nabla}v$ can be uniquely determined by an additional constraint, e.g., by setting 
\begin{align*}
	 \sum_{i=1}^{n_{v}^{K}} v(\bs x_{i}) &=\sum_{i=1}^{n_{v}^{K}} \Pi_{h}^{\nabla}v(\bs x_{i}), \qquad\mbox{ when } k=1,\\
	\int_{K} \Pi_{h}^{\nabla} v d \bs x &= \int_{K} v d\bs x, \qquad \mbox{ when } k\ge 2,
\end{align*}
where $n_{v}^{K}$ is the total number of vertices of $K$, and $\bs x_{i}$ ($i=1,\cdots, n_{v}^{K}$) are the coordinates of the vertices. 
%The VEM discretization of \eqref{eqn:weak} is given by a symmetric bilinear form $a_{h}: V_{h}\times V_{h} \to \R$ such that 
%\[
%	a_{h}(u_{h}, v_{h}) = \sum_{K\in \cT_{h}} a_{h}^{K}(u_{h}, v_{h}), \qquad \forall u_{h}, v_{h} \in V_{h},
%\]
%where $a_{h}^{K}(\cdot, \cdot)$ is a computable bilinear form defined on $V_{h}^{K}\times V_{h}^{K}.$ So the VEM discretization of \eqref{eqn:weak} reads: Find $u_{h}\in V_{h}$ such that 
%\begin{equation}
%\label{eqn:vem}
%	a_{h}(u_{h}, v_{h}) = (f, v_{h}), \quad \forall v_{h}\in V_{h}.
%\end{equation}

Like the standard finite element methods, the VEM approximation of the equation \eqref{eqn:weak} is to find $u_{h} \in V_{h}$ such that 
\[
	a(u_{h}, v) = \sum_{K} a^{K}(u_{h}, v) = (f, v), \qquad \forall v \in V_{h},
\]
where $a^{K}(\cdot, \cdot):=a(\cdot, \cdot)|_{K}$ is the restriction of the bilinear form on $K$. 
For any $v\in V_{h}^{K}$, we may decompose $v = \Pi_{h}^{\nabla} v + (I - \Pi_{h}^{\nabla}) v.$ By the orthogonality of $\Pi_{h}^{\nabla}$ and $(I - \Pi_{h}^{\nabla})$ in the energy norm, we have 
$$
	a^{K}(u_{h}, v) = a^{K}(\Pi_{h}^{\nabla} u_{h}, \Pi_{h}^{\nabla} v) + a^{K}((I - \Pi_{h}^{\nabla}) u_{h}, (I - \Pi_{h}^{\nabla})v).
$$
Now the first term can be evaluated exactly. For the second term, we replace it by some computable bilinear form $a^{K}((I - \Pi_{h}^{\nabla}) u_{h}, (I - \Pi_{h}^{\nabla})v) \approx s^{K} ((I - \Pi_{h}^{\nabla}) u_{h}, (I - \Pi_{h}^{\nabla})v)$, to get
\[
	a_{h}^{K}(u_{h}, v):= a^{K}(\Pi_{h}^{\nabla} u_{h}, \Pi_{h}^{\nabla} v) + s^{K}((I - \Pi_{h}^{\nabla}) u_{h}, (I - \Pi_{h}^{\nabla})v).
\]
The first term in the definition of $a_{h}^{K}(\cdot, \cdot)$ is called \emph{consistency term}, and the second term is called the \emph{stabilizing term}. For the well-posedness and convergence of the VEM, it is important that $a_{h}^{K}(\cdot, \cdot)$ satisfies 
\begin{itemize}
	\item \emph{Polynomial consistency}: for any $v\in V_{h}^{K}$ and $p\in \P_{1}(K)$, it satisfies 
	\begin{equation}
	\label{eqn:consist}
		a_{h}^{K} (v, p) = a^{K}(v, p).
	\end{equation}
	\item \emph{Stability}: There exist constants $c_{1}$ and $c_{2}$ independent of $\kappa$ and $h$ such that 
	\begin{equation}
	\label{eqn:bstab}
		c_{1} a^{K}(v, v)  \le  a_{h}^{K}(v,v) \le c_{2} a^{K}(v, v), \qquad \forall v\in V_{h}^{K}.
	\end{equation}
\end{itemize}
There are different choices for the bilinear form $s^{K}(\cdot, \cdot)$ that satisfy the polynomial consistency \eqref{eqn:consist} and stability \eqref{eqn:bstab} conditions, see for example~\cite{Beirao-da-Veiga.L;Brezzi.F;Cangiani.A;Manzini.G2013,Beirao-da-Veiga.L;Brezzi.F;Marini.L;Russo.A2016}. In the numerical test performed in Section~\ref{sec:num}, we use the standard choice of $s^{K}$ as in~\cite{Beirao-da-Veiga.L;Brezzi.F;Cangiani.A;Manzini.G2013} with weights $\kappa_{K}= \kappa(x)|_{K}$ on each element $K\in \cT_{h}.$

Let $a_{h}(\cdot, \cdot) :=\sum_{K\in \cT_{h}} a_{h}^{K}(\cdot, \cdot).$ Then the VEM discretization of \eqref{eqn:weak} reads: find $u_{h}\in V_{h}$ such that 
\begin{equation}
\label{eqn:discrete}
	a_{h}(u_{h}, v_{h}) = \langle f_{h}, v_{h}\rangle, \qquad \forall v_{h}\in V_{h},
\end{equation}
where $f_{h}$ is the $L^{2}$ projection of $f$ on the piecewise $\P_{k-2}(K)$ for each element  $K\in \cT_{h}.$ 
Let $A$ be the operator induced by the bilinear form $a_{h}(\cdot, \cdot)$, that is, 
$$
	(A v, w) = (v, w)_{A} :=a_{h}(v, w), \qquad \forall v, w \in V_{h}.
$$
Then solving \eqref{eqn:discrete} is equivalent to solve the linear system 
\begin{equation}
\label{eqn:linear}
	Au_{h} = b.
\end{equation}
It is clear that the operator $A$ is symmetric and positive definite. Now we give an estimate of the condition number of $A$.

\begin{lemma}
\label{lem:spectralradius}
  The condition number of the operator $A$, denoted by $\cK(A)$, satisfies 
  $$
  	\cK(A) \lesssim \cJ(\kappa) h^{-2},
  $$
  where $\cJ(\kappa):= \max_{\bs x \in \Omega} \kappa(x) /\min_{\bs x \in \Omega} \kappa(x)$ is the jump in the coefficient. 
\end{lemma}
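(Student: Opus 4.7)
The plan is to estimate $\lambda_{\max}(A)$ and $\lambda_{\min}(A)$ separately with respect to the $L^2$ inner product, and then bound their ratio. By definition,
\[
\lambda_{\max}(A) = \sup_{v\in V_h\setminus\{0\}} \frac{a_h(v,v)}{\|v\|_0^2}, \qquad \lambda_{\min}(A) = \inf_{v\in V_h\setminus\{0\}} \frac{a_h(v,v)}{\|v\|_0^2},
\]
so that $\cK(A) = \lambda_{\max}(A)/\lambda_{\min}(A)$.

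For the upper bound on $\lambda_{\max}(A)$, I would first use the stability estimate \eqref{eqn:bstab} to obtain $a_h^K(v,v) \le c_2\, a^K(v,v)$ elementwise. Bounding $\kappa$ above by $\max_\Omega \kappa$ gives $a^K(v,v) \le \max_\Omega \kappa \cdot |v|_{1,K}^2$. Then the inverse inequality of Lemma~\ref{lem:inverse} applied on each $K\in\cT_h$ yields $|v|_{1,K}^2 \lesssim h^{-2}\|v\|_{0,K}^2$, since $h = \max_K h_K$. Summing over elements gives $a_h(v,v) \lesssim \max_\Omega \kappa \cdot h^{-2}\|v\|_0^2$, hence $\lambda_{\max}(A)\lesssim \max_\Omega \kappa \cdot h^{-2}$.

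For the lower bound on $\lambda_{\min}(A)$, I would use the other half of \eqref{eqn:bstab} to get $a_h(v,v) \ge c_1\, a(v,v)$, together with $a(v,v) \ge \min_\Omega\kappa\cdot |v|_1^2$. The Poincaré--Friedrichs inequality on $H^1_0(\Omega)$ then gives $|v|_1^2 \gtrsim \|v\|_0^2$, so $\lambda_{\min}(A) \gtrsim \min_\Omega \kappa$. Combining the two bounds,
\[
\cK(A) \lesssim \frac{\max_\Omega \kappa \cdot h^{-2}}{\min_\Omega \kappa} = \cJ(\kappa)\, h^{-2}.
\]

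The steps here are all fairly routine given the tools already stated in the paper; the only mildly delicate point is to make sure the inverse inequality in Lemma~\ref{lem:inverse} (which is local on each polytopal element $K$) sums correctly into a global bound, which just requires using $h=\max_K h_K$ and summing over $K\in\cT_h$. No additional machinery beyond \eqref{eqn:bstab}, Lemma~\ref{lem:inverse}, and the Poincaré inequality is needed.
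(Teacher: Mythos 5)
Your proof is correct and follows essentially the same route as the paper: bound $\lambda_{\max}(A)$ via the stability estimate \eqref{eqn:bstab} and the inverse inequality of Lemma~\ref{lem:inverse}, and bound $\lambda_{\min}(A)$ via \eqref{eqn:bstab} and the Poincar\'e inequality. (In fact your statement $\lambda_{\min}(A)\gtrsim \min_\Omega\kappa$ is the correct form of what the paper's proof writes, slightly garbled, as $\lambda_{\min}(A)\gtrsim 1/\min_\Omega\kappa$.)
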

\begin{proof}
	By the stability of $a_{h}^{K}$ and inverse inequality Lemma~\ref{lem:inverse}, we have
	\begin{align*}
		(A v,v) & = a_{h}(v, v) = \sum_{K\in \cT_{h}} a_{h}^{K} (v,v) \\
		&\le \sum_{K\in \cT_{h}} c_{2} a^{K}(v,v) \le c_{2}\max_{\bs x \in \Omega} \kappa(\bs x) \sum_{K\in \cT_{h}}\|\nabla v\|^{2}_{0,K} \\
		&\lesssim h^{-2} \max_{\bs x \in \Omega} \kappa(\bs x) (v,v).
	\end{align*}
Since $A$ is symmetric and positive definite with respect to $(\cdot,\cdot)$, the above inequality implies that $\lambda_{\max}(A)\lesssim h^{-2} \max_{\bs x \in \Omega} \kappa(\bs x)$. 

On the other hand, by the Poincar\'e inequality and the stability of $a_{h}^{K}$, we obtain
	\begin{align*}
		 \min_{\bs x \in \Omega} \kappa(\bs x) (v,v) & \lesssim  \min_{\bs x \in \Omega} \kappa(\bs x) \|\nabla v\|^{2}\\
		&\lesssim \sum_{K\in \cT_{h}} a^{K}(v,v) \\
		&\lesssim  (A v, v),\qquad \forall v\in V_{h}.
	\end{align*}
This implies $\lambda_{\min}(A) \gtrsim 1/ \min_{\bs x \in \Omega} \kappa(\bs x)$. Therefore 
\[\cK(A) = \lambda_{\max}(A)/\lambda_{\min}(A)\lesssim h^{-2}\cJ(\kappa).\]
This completes the proof. 
\end{proof}
This lemma implies that the linear algebraic system resulting from the VEM discretization \eqref{eqn:discrete} is ill-conditioned with the condition number depends on both the mesh size and the jump in the coefficient. This dependence can be observed from the numerical experiments presented in Section~\ref{sec:num}. Thus the linear system \eqref{eqn:linear} is difficult to solve using the classic iterative methods such as Jacobi, Gauss-Seidel or CG, without effective preconditioners. The main purpose in this paper is the develop efficient auxiliary space preconditioners for the linear system \eqref{eqn:linear}.
\begin{remark}
	Based on the norm equivalences obtained in~\cite{Chen.L;Huang.J2018}, the results in this paper can be applied to the other type of VEM discretizations directly (namely, with different form of stabilization term).
\end{remark}

%Note that the interpolation operator $I_{h}$ is in general not well-defined for $H^{1}(\Omega)$, but
%With both of the assumptions, we have the following convergence theorem (\cite[Theorem 3.1]{Beirao-da-Veiga.L;Brezzi.F;Cangiani.A;Manzini.G2013}: 
%\begin{theorem}
%\label{thm:conv}
%	Under the Assumptions~\ref{eqn:bilinear:assumption1}-\ref{eqn:bilinear:assumption2}, the VEM method has a unique solution $u_{h} \in V_{h}$. Moreover, for every approximation $u_{I}\in V_{h}$ and for any approximation $u_{\pi}$ that is piecewise $\P_{k}$, we have the error estimate: 
%	$$
%		|u-u_{h}|_{1} \le C(|u-u_{I}|_{1} + |u-u_{\pi}| + \cF_{h})
%	$$
%	where $C$ is a constant depending only on $C_{stab}$, and $\cF_{h}$ is hte smallest constant such that 
%	$$
%		(f,v) -\langle f_{h}, v\rangle \le \cF_{h}|v|_{1}, \quad \forall v\in V_{h}.
%	$$ 
%\end{theorem}
%According to this theorem, the error estimate consists the interpolation error, the (local) projection error onto $\P_{k}(K)$, and the approximation of the right hand side. 

\section{Auxiliary Space Preconditioner}
\label{sec:aux}
In this section, we construct an auxiliary space preconditioner for solving the discrete system of equation \eqref{eqn:linear}. To do that, we need to introduce an auxiliary space. For each $K \in \cT_{h}$, we introduce an auxiliary triangulation such that each edge of $K$ is an edge of some element in this triangulation. This can be done using the Delaunay triangulation. This leads to a conforming triangulation of the whole domain $\Omega.$ Moreover, with the Assumption ({\bf A}) on $\cT_{h}$, the resulting triangulation is quasi-uniform. On this triangulation, we define a standard conforming $\P_{1}$ finite element subspace $V^{c}_{h}$. We consider the auxiliary problem: find $u^{c}_{h}\in V^{c}_{h}$ such that 
\begin{equation}
\label{eqn:aux}
	a(u^{c}_{h}, v_{h}) = (f, v_{h}), \qquad \forall v_{h}\in V^{c}_{h}.
\end{equation}
Similiary, let $A_{c}$ be the operator induced by the bilinear form $a(\cdot, \cdot)$, that is, 
$$
	(A_{c} v, w) = (v, w)_{A_{c}} :=a(v, w), \qquad \forall v, w \in V^{c}_{h}.
$$
The operator $A_{c}$ is symmetric and positive definite.

To construct the auxiliary space preconditioner, we use $V_{h}$ as the ``fine'' space and $V_{h}^{c}$ as the ``coarse'' space. Since $A_{c}$ is the conforming piecewise linear finite element discretization of the  equation \eqref{eqn:weak}, the ``coarse'' problem in $V_{h}^{c}$ can be solved by many existing efficient solvers such as the standard multigrid, domain decomposition  or algebraic multigrid (AMG) solvers (see for example \cite{Xu.J;Zhu.Y2008,Zhu.Y2008}).  Next, on the fine space, we define a ``smoother'' $\cR:\: V_{h}\rightarrow V_{h}$, which is symmetric positive definite. For example, $\cR$ could be a Jacobi or symmetric Gauss-Seidel smoother. We denote $s(\cdot, \cdot)$ as the bilinear form corresponding to $\cR^{-1}$.  Finally, to connect the ``coarse'' space $V_{h}^{c}$ with the ``fine'' space $V_{h}$, we need a ``prolongation'' operator $\Pi: V_{h}^{c} \to V_{h}$. A ``restriction'' operator $\Pi^{t} : V_{h} \rightarrow V_{h}^{c}$ is then defined as 
$$
	(\Pi^{t} v, w) = (v, \Pi w), \quad\textrm{for } v\in V_h\textrm{ and } w\in V_h^{c}.
$$
Then, the auxiliary space preconditioner $B:\: V_h\rightarrow V_h$ is given by 
\begin{align}
&\textrm{Additive} \qquad &&B_{\rm add} = \cR + \Pi A_{c}^{-1} \Pi^t,  \label{eqn:Badd}\\
&\textrm{Multiplicative} \qquad && I-B_{\rm mul}A = (I-\cR A)(I-\Pi A_{c}^{-1} \Pi^t)(I-\cR A). \label{eqn:Bmul}
\end{align}
With the notation introduced above, we have the following theorem for the auxiliary space preconditioners. 
\begin{theorem}[cf. \cite{Xu.J1996,Hiptmair.R;Xu.J2007}]
\label{thm:aux0}
Let the auxiliary space preconditioner defined above satisfy the following conditions: 
	\begin{itemize}
		\item[({\bf C0})] The smoother $\cR$ satisfies that 
		\[
			a_{h}(v, v) \le c_{0} s(v, v), \qquad \forall v\in V_{h},  
		\]
		where $c_{0} >0$ is independent $h$ and $\kappa$.
		\item[({\bf C1})] The operator $\Pi : V_{h}^{c} \to V_{h}$ is stable in the sense that there exists a constant $c_{1}>0$ independent of $h$ and $\kappa$ such that 
		\[
			a_{h}(\Pi  w, \Pi  w) \le c_{1} a (w, w), \qquad \forall w\in V_{h}^{c}.
		\]
		\item[({\bf C2})] For any $v\in V_{h}$, there exist $w\in V_{h}^{c}$ and $v_{0}\in V_{h}$ such that $v = v_{0}+ \Pi w$ such that 
		\[
			s(v_{0}, v_{0}) + a(w,w) \le c_{2} a_{h}(v, v).
		\]
	\end{itemize}
	Then 
	\[
		\cK(BA) \le c_{2}(c_{0} + c_{1}). 
	\]
\end{theorem}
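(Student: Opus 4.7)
The plan is to bound $\lambda_{\max}(BA)$ and $\lambda_{\min}(BA)$ separately, exploiting the fact that $BA$ is self-adjoint in the $B^{-1}$-inner product so that
\[
\lambda_{\min}(BA) = \min_{v \in V_h} \frac{(Av,v)}{(B^{-1}v,v)}, \qquad \lambda_{\max}(BA) = \max_{v \in V_h} \frac{(Av,v)}{(B^{-1}v,v)}.
\]
The heart of the argument is the additive Schwarz (parallel-sum) identity for $B = \cR + \Pi A_{c}^{-1}\Pi^{t}$:
\[
(B^{-1}v,v) \;=\; \inf_{\substack{v = v_0 + \Pi w\\ v_0\in V_h,\; w\in V_h^{c}}} \bigl[\,s(v_0,v_0) + a(w,w)\,\bigr].
\]
I would first establish this identity by a standard Lagrange-multiplier computation: the minimizer of $s(v_0,v_0)+a(w,w)$ under the affine constraint $v_0 + \Pi w = v$ satisfies $v_0 = \cR\mu$ and $w = A_{c}^{-1}\Pi^{t}\mu$ for a multiplier $\mu \in V_h$; the constraint then forces $B\mu = v$, i.e., $\mu = B^{-1}v$, and substitution recovers the claimed minimum value $(B^{-1}v,v)$.

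With the infimum representation in hand, the lower bound on $\lambda_{\min}(BA)$ is immediate from (C2): for each $v \in V_h$, hypothesis (C2) supplies an admissible decomposition whose total energy is at most $c_2\,a_h(v,v)$, and using it as a competitor in the infimum yields $(B^{-1}v,v) \le c_2\,(Av,v)$, so $\lambda_{\min}(BA) \ge c_2^{-1}$.

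For the upper bound on $\lambda_{\max}(BA)$, I would take an arbitrary decomposition $v = v_0 + \Pi w$ and apply the triangle inequality in the $a_h$-seminorm together with (C0) and (C1) termwise:
\[
\sqrt{a_h(v,v)} \;\le\; \sqrt{a_h(v_0,v_0)} + \sqrt{a_h(\Pi w, \Pi w)} \;\le\; \sqrt{c_0}\,\sqrt{s(v_0,v_0)} + \sqrt{c_1}\,\sqrt{a(w,w)}.
\]
Squaring and applying Cauchy-Schwarz to the vectors $(\sqrt{c_0},\sqrt{c_1})$ and $(\sqrt{s(v_0,v_0)},\sqrt{a(w,w)})$ gives $a_h(v,v) \le (c_0+c_1)[s(v_0,v_0)+a(w,w)]$. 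Taking the infimum over all admissible decompositions and invoking the parallel-sum identity produces $(Av,v) \le (c_0+c_1)(B^{-1}v,v)$, so $\lambda_{\max}(BA) \le c_0+c_1$. Combining the two bounds yields $\cK(BA) \le c_2(c_0+c_1)$.

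The main obstacle I anticipate is the parallel-sum identity itself, which is the nonroutine step that converts the explicit sum defining $B$ into an infimum over decompositions; once it is in place the two Rayleigh-quotient estimates are short, and the sharp prefactor $c_0+c_1$ (rather than a spurious $2\max(c_0,c_1)$) falls out cleanly from the Cauchy-Schwarz pairing in the upper-bound step. The multiplicative case \eqref{eqn:Bmul} is not covered by this proof, but reduces to the additive case through the standard Xu-Zikatanov identity, which relates the error propagation operator to an additive Schwarz operator on the same subspace splitting.
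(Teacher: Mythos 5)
Your argument is correct, and it is essentially the standard proof of the auxiliary space (fictitious space) lemma; the paper itself does not prove Theorem~\ref{thm:aux0} but cites it from \cite{Xu.J1996,Hiptmair.R;Xu.J2007}, so your write-up supplies the proof that the paper omits. The three ingredients all check out: the parallel-sum identity $(B_{\rm add}^{-1}v,v)=\inf_{v=v_0+\Pi w}\bigl[s(v_0,v_0)+a(w,w)\bigr]$ follows exactly by the Lagrange-multiplier computation you describe (with $s(v_0,v_0)=(\cR^{-1}v_0,v_0)$ and the infimum taken over a nonempty set since $w=0$, $v_0=v$ is always admissible); ({\bf C2}) then gives $\lambda_{\min}(B_{\rm add}A)\ge c_2^{-1}$ by exhibiting a competitor, and the triangle inequality in the $a_h$-norm combined with ({\bf C0}), ({\bf C1}) and the Cauchy--Schwarz pairing gives $\lambda_{\max}(B_{\rm add}A)\le c_0+c_1$ with the sharp additive constant. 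The one caveat is that the theorem as stated in the paper covers both \eqref{eqn:Badd} and \eqref{eqn:Bmul}, and your proof handles only the additive operator; your closing remark that the multiplicative case "reduces to the additive case through the Xu--Zikatanov identity" is the right pointer but is not itself a proof (one must still show that the error propagation operator $(I-\cR A)(I-\Pi A_c^{-1}\Pi^t)(I-\cR A)$ is a contraction in the $A$-norm with a rate controlled by the same constants, which requires in addition that $\cR$ be appropriately scaled so that $I-\cR A$ is nonexpansive). Since the paper states the quantitative bound $c_2(c_0+c_1)$ only in the form matching the additive analysis and defers everything to the references, this is a minor incompleteness rather than an error.
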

\begin{remark}
\begin{enumerate}
	\item In the definition of the auxiliary space preconditioners \eqref{eqn:Badd}-\eqref{eqn:Bmul}, the coarse space solver $A_{c}^{-1}$ could be replaced by an inexact solver $\cB_{c} \approx A_{c}^{-1}$ (see for example \cite[Corollary 2.3]{Hiptmair.R;Xu.J2007}). For simplicity, we use the exact solver in the analysis. 
	\item In ~\eqref{eqn:Badd}, if we ignore the smoother $\cR$, then the preconditioner is usually called \emph{fictitious space preconditioner} (cf. \cite{Nepomnyaschikh.S1992}). In the sequel, we denote 
	\[
		B_{{\rm fict}}:= \Pi A_{c}^{-1} \Pi^t.
	\]
	The auxiliary space preconditioner can be viewed as a generalization of the fictitious space preconditioner by a special choice of the ``fictitious space''. In particular, the fictitious space is defined as a product space having $V_{h}$ itself as one of the component. This makes it easier to construct the map from the fictitious space to the original space, which is required to be surjective. 
\end{enumerate}
\end{remark}
The proof of
Theorem~\ref{thm:aux0} amounts to verifying three conditions ({\bf C0})-({\bf C2}). The smoothing property ({\bf C0}) is given by the following lemma. The proof of the smoothing properties are standard. The proof can be carried out by showing the estimates for the Jacobi smoother, as the Jacobi smoother and the symmetric Gauss-Seidel smoother are spectrally equivalent for any symmetric positive definition matrix (see for example \cite[Proposition~6.12]{Vassilevski.P2008} or \cite[Lemma~3.3]{Zikatanov.L2008}.) We omit the details here.
\begin{lemma}\label{lm:smoother}
  Let $s(\cdot,\cdot)$ be the bilinear form associated to Jacobi, or symmetric Gauss-Seidel smoother. Then  for any $v\in V_{h}$, we  have the following estimates
\begin{align}
a_{h}(v,v) &\le c_{0} s(v,v), \label{eqn:smoother1}\\
s(v,v) &\simeq h^{-2}\|v\|_{0, \kappa}^{2}. \label{eqn:smoother2}
\end{align}
Here, the constants are independent of coefficient and mesh size.
\end{lemma}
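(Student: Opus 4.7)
The plan follows the author's explicit suggestion: by the spectral equivalence of the Jacobi and symmetric Gauss--Seidel smoothers on any SPD matrix (\cite[Proposition~6.12]{Vassilevski.P2008}, \cite[Lemma~3.3]{Zikatanov.L2008}), it suffices to prove \eqref{eqn:smoother1}--\eqref{eqn:smoother2} for the Jacobi choice $\cR=D^{-1}$ with $D=\mathrm{diag}(A)$, for which $s(v,v)=v^{\top}Dv=\sum_i A_{ii}v_i^2$ when $v=\sum_i v_i\phi_i$ is expanded in the nodal virtual element basis.

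For the smoothing bound \eqref{eqn:smoother1}, I would estimate $\lambda_{\max}(D^{-1}A)$ by a direct row-sum argument. Assumption ({\bf A}) and the quasi-uniformity of the virtual triangulations $\cT_{K}$ give a uniform bound $N$ on the number of nonzero entries per row of $A$, and the Cauchy--Schwarz inequality $|A_{ij}|=|a_h(\phi_i,\phi_j)|\le\sqrt{A_{ii}A_{jj}}$ (which holds because $a_h$ is SPD) controls each off-diagonal entry by the diagonals. Two applications of Cauchy--Schwarz then yield $v^{\top}Av\le N\cdot v^{\top}Dv$, giving \eqref{eqn:smoother1} with $c_0=N$ independent of $h$ and $\kappa$.

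For the diagonal equivalence \eqref{eqn:smoother2}, the plan is to establish two local ingredients and combine them. First, each diagonal entry satisfies
\[ A_{ii}\simeq h^{-2}\|\phi_i\|_{0,\kappa}^2. \]
The upper bound combines stability \eqref{eqn:bstab}, piecewise constancy of $\kappa$ on $\cT_h$, and the inverse inequality (Lemma~\ref{lem:inverse}):
\[ A_{ii}\le c_{2}\,a(\phi_i,\phi_i)=\sum_{K\ni x_i}\kappa_K\|\nabla\phi_i\|_{0,K}^2\lesssim h^{-2}\sum_{K\ni x_i}\kappa_K\|\phi_i\|_{0,K}^2=h^{-2}\|\phi_i\|_{0,\kappa}^2. \]
The matching lower bound follows from a Poincar\'e--Friedrichs inequality on each patch $K\ni x_i$, together with the fact that $\phi_i$ vanishes at the other vertices of $K$. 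Second, an element-wise application of the $L^2$-versus-degrees-of-freedom equivalence for VEM functions from \cite{Chen.L;Huang.J2018} gives the lumped-mass equivalence
\[ \|v\|_{0,\kappa}^2\simeq\sum_i v_i^2\,\|\phi_i\|_{0,\kappa}^2. \]
Plugging these into $s(v,v)=\sum_i A_{ii}v_i^2$ yields \eqref{eqn:smoother2}.

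The main obstacle is that the virtual basis functions are known only through their degrees of freedom, so the standard $\P_{1}$-FE estimates $\|\phi_i\|_{0,K}\simeq h^{d/2}$ and $\|\nabla\phi_i\|_{0,K}\simeq h^{d/2-1}$ cannot be established by direct computation. All element-level estimates must therefore be extracted from VEM-specific tools --- the stability \eqref{eqn:bstab}, Lemma~\ref{lem:inverse}, and the norm equivalences of \cite{Chen.L;Huang.J2018} --- each of which ultimately relies on Assumption ({\bf A}) on the virtual triangulation of each polytopal element.
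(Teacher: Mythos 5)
Your proof is correct and follows exactly the route the paper indicates (reduction to the Jacobi smoother via its spectral equivalence with symmetric Gauss--Seidel, then standard local estimates using the stability \eqref{eqn:bstab}, the inverse inequality of Lemma~\ref{lem:inverse}, and the DOF--$L^2$ norm equivalences of \cite{Chen.L;Huang.J2018}); the paper itself omits these details. One small point of precision: the Friedrichs inequality giving the lower bound $A_{ii}\gtrsim h^{-2}\|\phi_i\|_{0,\kappa}^2$ should be justified by the fact that $\phi_i$ vanishes on the entire edges (or faces) of $\partial K$ not containing $x_i$, since vanishing at the other vertices alone would not suffice in dimension $d\ge 2$.
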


For the intergrid transfer operator $\Pi : V_{h}^{c} \to V_{h},$ we defined  on each element $K\in \cT_{h}$
\begin{equation}
\label{eqn:pi}
	-\Delta (\Pi w) =0, \mbox{ in } K,\qquad \Pi w = w, \mbox{ on } \partial K,
\end{equation}
for each $w\in V_{h}^{c}$; namely, $\Pi w$ is the harmonic extension of $w|_{\partial K} \in \B_{1}(\partial K).$ We remark that when $k=1$, it is not necessary to solve \eqref{eqn:pi} in order to construct the operator $\Pi.$ In fact, in this case the matrix representation of $\Pi$ is just the identity matrix due to the corresponding degrees of freedom for $V_{h}$ and $V_{h}^{c}$. This intergrid transfer operator defined in \eqref{eqn:pi} satisfies the following properties. 
\begin{lemma}
\label{lem:pi}
For any $w\in V_{h}^{c}$, the operator $\Pi : V_{h}^{c} \to V_{h}$ defined by \eqref{eqn:pi} satisfies the following properties: 
\begin{align}
		\|\Pi w\|_{A} &\lesssim \|w\|_{A_{c}} \label{eqn:pistab}\\
		\|(I - \Pi) w\|_{0,\kappa} &\lesssim h\|w\|_{A_{c}} \label{eqn:piapprox}. 
\end{align}
\end{lemma}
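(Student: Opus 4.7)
My plan is to reduce both inequalities to element-local estimates on each $K \in \cT_h$ and then sum, taking advantage of three observations: (i) $\kappa$ is piecewise constant on $\cT_h$, so $a^K(v,v) = \kappa_K |v|_{1,K}^2$; (ii) $\Pi w$ is the harmonic extension of $w|_{\partial K}$, hence the minimizer of $|\cdot|_{1,K}$ among $H^1(K)$ functions with that trace; and (iii) $w|_K \in H^1(K)$ itself carries the trace $w|_{\partial K}$ and is therefore an admissible competitor in this minimization.

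For \eqref{eqn:pistab}, I would first invoke the upper half of the stability estimate \eqref{eqn:bstab} to write $a_h^K(\Pi w, \Pi w) \le c_2\, a^K(\Pi w, \Pi w) = c_2 \kappa_K |\Pi w|_{1,K}^2$. The minimization property of the harmonic extension then gives $|\Pi w|_{1,K} \le |w|_{1,K}$, so $a_h^K(\Pi w, \Pi w) \le c_2\, a^K(w,w)$. Summing over $K \in \cT_h$ yields
$\|\Pi w\|_A^2 = a_h(\Pi w, \Pi w) \le c_2\, a(w,w) = c_2\, \|w\|_{A_c}^2$.

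For \eqref{eqn:piapprox}, the main observation is that $w - \Pi w$ vanishes on $\partial K$, so $w - \Pi w \in H_0^1(K)$ on each element. An integration by parts using $\Delta(\Pi w) = 0$ in $K$ delivers the orthogonality $(\nabla(w-\Pi w), \nabla \Pi w)_K = 0$, which yields the Pythagorean identity $|w|_{1,K}^2 = |\Pi w|_{1,K}^2 + |w-\Pi w|_{1,K}^2$ and hence $|w - \Pi w|_{1,K} \le |w|_{1,K}$. A Poincar\'e--Friedrichs inequality on $K$ applied to $w - \Pi w$ then produces $\|w - \Pi w\|_{0,K}^2 \lesssim h_K^2 |w|_{1,K}^2$. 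Multiplying by $\kappa_K$ and summing over $K$ gives $\|(I-\Pi)w\|_{0,\kappa}^2 \lesssim h^2\, a(w,w) = h^2\, \|w\|_{A_c}^2$, as desired.

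The main obstacle is ensuring that the Poincar\'e--Friedrichs constant used above is uniform over all polytopal elements $K \in \cT_h$, since a priori it depends on the shape of $K$. This is precisely where Assumption ({\bf A}) becomes essential: because each $K$ admits a shape-regular and quasi-uniform virtual sub-triangulation $\cT_K$ whose edges cover $\partial K$, one obtains a Poincar\'e--Friedrichs constant of order $h_K$ with multiplicative constant depending only on the shape-regularity parameters of $\cT_K$, uniformly in $h$ and $K$. A minor preliminary point, namely that $w|_{\partial K} \in \B_1(\partial K)$ so that $\Pi w$ genuinely lies in $V_h^K$ (and is continuous across element interfaces, so that $\Pi w \in V_h$), follows from the same edge-alignment property of the auxiliary triangulation together with the continuity of $w \in V_h^c$.
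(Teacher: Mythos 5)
Your proof is correct, and it takes a genuinely different (and in places cleaner) route than the paper's. For \eqref{eqn:pistab} the paper bounds $\|\nabla \Pi w\|_{0,K}$ via a trace inequality, $\|\nabla \Pi w\|_{0,K}\lesssim \|w\|_{1/2,\partial K}\lesssim \|\nabla w\|_{0,K}$, whereas you invoke the Dirichlet principle: since $w|_K$ is an admissible competitor with the same boundary trace, $|\Pi w|_{1,K}\le |w|_{1,K}$ with constant exactly $1$. That is more elementary and avoids having to absorb the lower-order part of the $H^{1/2}(\partial K)$ norm by a scaling/quotient argument. For \eqref{eqn:piapprox} the paper views $w$ as the piecewise linear nodal interpolant of $\Pi w$ on the auxiliary sub-triangulation and quotes the interpolation estimate $\|\Pi w - w\|_{0,K}\lesssim h\|\nabla \Pi w\|_{0,K}$; you instead observe that $w-\Pi w\in H_0^1(K)$, use the weak harmonicity of $\Pi w$ to get the Pythagorean bound $|w-\Pi w|_{1,K}\le |w|_{1,K}$, and conclude with a Friedrichs inequality. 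Your route sidesteps the regularity question implicit in applying a nodal interpolation error bound to a function only known to lie in $H^1(K)$. One small simplification is available at your final step: for $v\in H_0^1(K)$ the Friedrichs constant is $O(h_K)$ with a factor depending only on the dimension (extend $v$ by zero to a cube of diameter comparable to $h_K$ and apply Friedrichs there), so the shape-regularity of the virtual triangulation in Assumption ({\bf A}) is not actually needed for that inequality; it is needed only, as you note at the end, to guarantee that the auxiliary triangulation exists with edges matching $\partial K$ so that $\Pi w\in V_h$.
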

\begin{proof}
	By the stability assumption \eqref{eqn:bstab} we only need to show that 
	\[
		\|\nabla \Pi w\|_{0,K} \lesssim \|\nabla w\|_{0,K}, \quad \forall K\in \cT_{h}.
	\]
	By the definition of $\Pi w$ and trace inequality, we have 
	\begin{align*}
		\|\nabla \Pi w\|_{0,K} \lesssim \|w\|_{1/2, \partial K} \lesssim \|\nabla w\|_{0,K}.
	\end{align*}
	This implies \eqref{eqn:pistab}. 
	
	To prove \eqref{eqn:piapprox}, we can view $w\in V_{h}^{c}$ as piecewise linear interpolation of $\Pi w$ on each element $K\in \cT_{h}$. Therefore, by the stability of $\Pi$ \eqref{eqn:pistab} we have 
\[
	\|\Pi w - w\|_{0,K} \lesssim h \|\nabla \Pi w\|_{0,K} \lesssim h \|\nabla w\|_{0,K}.
\]
The estimate \eqref{eqn:piapprox} then follows.
\end{proof}
Inequality \eqref{eqn:pistab} is exactly the condition ({\bf C1}) in Theorem~\ref{thm:aux0}. It remains to verify the stable decomposition condition ({\bf C2}). This is achieved by the following lemma.
\begin{lemma}
\label{lm:stabdecomp}
There exists a linear operator $P: V_{h} \to V_{h}^{c}$ such that 
\begin{align}
	\|P v\|_{A_{c}} &\lesssim \|v\|_{A} \label{eqn:stab}\\
	\|v - \Pi P v\|_{0,\kappa} &\lesssim h \|v\|_{A} \label{eqn:approx}
\end{align}
\end{lemma}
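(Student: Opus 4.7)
The plan is to build $P$ as a weighted Scott--Zhang-type quasi-interpolation from $V_h \subset H^1_0(\Omega)$ into $V_h^c$. For each vertex $\bs x_i$ of the auxiliary mesh I would fix a ``dual'' auxiliary simplex $\tau_i \ni \bs x_i$ and let $(Pv)(\bs x_i)$ be the weighted $L^2$-projection of $v$ onto the dual basis function supported on $\tau_i$. When $\bs x_i$ is incident to several polytopes of $\cT_h$ carrying different values of $\kappa$, the simplex $\tau_i$ is chosen inside a polytope where $\kappa$ is locally maximal; this is the standard Bernardi--Verf\"urth / Xu--Zikatanov device used to make interpolation operators robust under jump coefficients. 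Because the auxiliary triangulation is aligned with $\cT_h$, every $\tau_i$ sits in a single polytope, so $\kappa$ is constant on each $\tau_i$.

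The first main step is to prove weighted local Scott--Zhang estimates: for every auxiliary simplex $\tau \subset K_\tau \in \cT_h$,
\begin{align*}
\kappa_{K_\tau}\|\nabla Pv\|_{0,\tau}^2 &\lesssim \sum_{K\in\omega(\tau)} \kappa_K \|\nabla v\|_{0,K}^2,\\
\kappa_{K_\tau}\|v - Pv\|_{0,\tau}^2 &\lesssim h^2 \sum_{K\in\omega(\tau)} \kappa_K \|\nabla v\|_{0,K}^2,
\end{align*}
with constants depending only on the shape regularity of $\cT_K$ from Assumption ({\bf A}) and independent of $h$ and of $\cJ(\kappa)$. Summing over $\tau$ and using the stability \eqref{eqn:bstab} to identify $\sum_K \kappa_K\|\nabla v\|_{0,K}^2 \simeq a_h(v,v) = \|v\|_A^2$ immediately yields \eqref{eqn:stab} and, simultaneously, the auxiliary $L^2$-bound $\|v - Pv\|_{0,\kappa} \lesssim h\|v\|_A$.

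Estimate \eqref{eqn:approx} then follows from a triangle inequality combined with the already established Lemma~\ref{lem:pi}:
\[
\|v - \Pi Pv\|_{0,\kappa} \le \|v - Pv\|_{0,\kappa} + \|(I-\Pi)Pv\|_{0,\kappa} \lesssim h\|v\|_A + h\|Pv\|_{A_c} \lesssim h\|v\|_A,
\]
where the middle step uses the just-obtained $L^2$-bound together with \eqref{eqn:piapprox} applied to $w = Pv \in V_h^c$, and the final step uses \eqref{eqn:stab}. The main obstacle is the jump-robustness of the local Scott--Zhang bounds at interface vertices: the patch $\omega(\tau)$ can cross an interface into polytopes with arbitrarily different $\kappa$, and the verification that choosing $\tau_i$ in the $\kappa$-largest polytope --- combined with the fact that $\kappa$ is resolved by $\cT_h$ and hence also by the auxiliary mesh --- removes any dependence on $\cJ(\kappa)$ from the local constants is where the classical jump-robust interpolation argument must be carried through carefully in the present polytopal VEM setting.
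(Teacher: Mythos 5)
Your construction of $P$ is genuinely different from the paper's, and as written it has a gap at its central step. The paper exploits the fact that vertex values are degrees of freedom of the VEM space: it takes $P$ to be the plain nodal interpolant, $(Pv)(\bs x_i)=v(\bs x_i)$, defined element by element on each polytope $K$. This makes the estimates $\|(I-P)v\|_{0,K}\lesssim h\|\nabla v\|_{0,K}$ and $\|\nabla Pv\|_{0,K}\lesssim\|\nabla v\|_{0,K}$ completely local to $K$; since $\kappa$ is constant on each polytope, multiplying by $\kappa_K$ and summing gives \eqref{eqn:stab} and the $L^2$ bound with no interaction between elements carrying different coefficients. Your Scott--Zhang construction replaces the point value at $\bs x_i$ by an average over a dual simplex $\tau_i$, which is unnecessary here (the point values exist and are exactly the DOFs) and reintroduces the coupling across interfaces that the nodal interpolant avoids; it also destroys the property, noted after \eqref{eqn:pi}, that for $k=1$ the matrix representation of the intergrid transfer is the identity.

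The gap itself: the local weighted Scott--Zhang estimates you state, with constants independent of $\cJ(\kappa)$, are not true for arbitrary piecewise-constant $\kappa$. Their proof subtracts a single constant $c$ on the patch $\omega(\tau)$ and needs a weighted Poincar\'e-type inequality connecting the dual simplices $\tau_i$ of the vertices of $\tau$ (which may lie in different polytopes) back to $K_\tau$; bounding the resulting connecting terms by $\sum_{K}\kappa_K\|\nabla v\|_{0,K}^2$ requires quasi-monotonicity of the coefficient distribution (Dryja--Sarkis--Widlund, Petzoldt). The ``place $\tau_i$ in the $\kappa$-maximal polytope'' device controls the vertex averages but not the path between them, and checkerboard-type configurations around a cross point are standard counterexamples where the patchwise weighted Poincar\'e constant degenerates with the jump. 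Since the paper assumes only that $\kappa$ is resolved by $\cT_0$ --- and even tests random coefficients --- your route cannot deliver the jump-independent constants claimed in the lemma without an extra hypothesis. Switching to the nodal interpolant removes the issue entirely; the remainder of your argument (the triangle inequality $\|v-\Pi Pv\|_{0,\kappa}\le\|v-Pv\|_{0,\kappa}+\|(I-\Pi)Pv\|_{0,\kappa}$ combined with \eqref{eqn:piapprox} and \eqref{eqn:stab}) coincides with the paper's and is correct.
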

\begin{proof}
	Let $P = I_{K}: V_{h}^{K} \to V_{h}^{c}(K)$ be the local interpolation operator on the subdomain $K\in \cT_{h}$ such that $I_{K}(v) (\bs x_{i}) = v(\bs x_{i})$ for each $v\in V_{h}^{K}$ and each vertex $\bs x_{i}\in K$. Note this local interpolation operator is not well-defined for general $H^{1}(K)$, but it is well-defined on the VEM space $V_{h}$. Then it is obvious that  
	\begin{equation}
	\label{eqn:Papprox}
		\|(I-P) v\|_{0,K} \lesssim h \|\nabla v\|_{0,K},
	\end{equation} and hence $P$ satisfies the stability \eqref{eqn:stab}.
	
	To prove \eqref{eqn:approx}, by triangle inequality
	\begin{align*}
		\|v - \Pi P v\|_{0,K} &\le \|v - Pv\|_{0,K} + \|(I-\Pi) Pv\|_{0,K}\\
		&\lesssim h \|\nabla v\|_{0,K} + h\| \nabla (Pv)\|_{0,K},
	\end{align*}
	where in the second inequality we used the approximation \eqref{eqn:Papprox} of $P$ for the first term, and we used \eqref{eqn:piapprox} for the second term.  The estimate \eqref{eqn:approx} then follows by \eqref{eqn:stab}. This completes the proof.
\end{proof}
Lemma~\ref{lm:stabdecomp} implies that for any $v\in V_{h}$, we can decompose it as $v = v_{0} + \Pi w,$ where $w = Pv \in V_{h}^{c}$ and $v_{0} = v - \Pi w \in V_{h}$. With the smoothing property \eqref{eqn:smoother2}, we get 
\[	
	s(v_{0}, v_{0}) + a(w, w) \lesssim h^{-2} \|v_{0}\|_{0,\kappa}^{2} + \|v\|_{A}^{2} \lesssim \|v\|_{A}^{2}.
\]
This verifies the condition ({\bf C2}). In summary, by the auxiliary space preconditioner framework Theorem~\ref{thm:aux0}, we have the following main theorem. 
\begin{theorem}
\label{thm:aux}
	The auxiliary space preconditioner (both additive and multiplicative) defined in \eqref{eqn:Badd}~\eqref{eqn:Bmul} satisfies: 
	$$
		\cK(BA) \le C,
	$$
	where the constant $C$ is independent of the mesh size $h$ and $\kappa$. 
\end{theorem}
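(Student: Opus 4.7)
The plan is to apply the abstract auxiliary space framework of Theorem~\ref{thm:aux0} and verify its three hypotheses (\textbf{C0})--(\textbf{C2}) using the lemmas assembled earlier in the section. These three verifications have been set up precisely so that each hypothesis follows from one of Lemmas~\ref{lm:smoother}, \ref{lem:pi}, and \ref{lm:stabdecomp}, so the proof itself is largely a packaging step.

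First, I would observe that (\textbf{C0}) is immediate from Lemma~\ref{lm:smoother}: the Jacobi (or symmetric Gauss--Seidel) smoother satisfies $a_{h}(v,v) \le c_{0}\, s(v,v)$ for all $v \in V_{h}$, with $c_{0}$ independent of $h$ and $\kappa$. Second, (\textbf{C1}) is precisely the content of the stability estimate \eqref{eqn:pistab} in Lemma~\ref{lem:pi}, since $\|\cdot\|_{A}^{2} = a_{h}(\cdot,\cdot)$ and $\|\cdot\|_{A_{c}}^{2} = a(\cdot,\cdot)$.

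The substantive step is (\textbf{C2}). For arbitrary $v \in V_{h}$, I would use the local interpolation operator $P$ of Lemma~\ref{lm:stabdecomp} to define $w := Pv \in V_{h}^{c}$ and $v_{0} := v - \Pi w \in V_{h}$, producing the splitting $v = v_{0} + \Pi w$. The coarse component is controlled directly by the stability estimate \eqref{eqn:stab}, giving $a(w,w) = \|Pv\|_{A_{c}}^{2} \lesssim \|v\|_{A}^{2} = a_{h}(v,v)$. For the fine component $v_{0}$, I would combine the smoother equivalence \eqref{eqn:smoother2}, namely $s(v_{0},v_{0}) \simeq h^{-2}\|v_{0}\|_{0,\kappa}^{2}$, with the approximation bound \eqref{eqn:approx}, $\|v - \Pi P v\|_{0,\kappa} \lesssim h\,\|v\|_{A}$, to conclude that $s(v_{0},v_{0}) \lesssim \|v\|_{A}^{2}$. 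Summing these two estimates yields (\textbf{C2}) with a constant $c_{2}$ independent of $h$ and $\kappa$, and Theorem~\ref{thm:aux0} then delivers $\cK(BA) \le c_{2}(c_{0}+c_{1}) =: C$.

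The only conceptually delicate point in this plan is the matching of scales in (\textbf{C2}): because $s(\cdot,\cdot)$ behaves like $h^{-2}\|\cdot\|_{0,\kappa}^{2}$, the splitting $v = v_{0} + \Pi P v$ must lose exactly one power of $h$ in the weighted $L^{2}$-norm relative to the energy norm. This is precisely the $O(h)$ approximation property of the harmonic extension composed with nodal interpolation, which is why estimates \eqref{eqn:piapprox} and \eqref{eqn:approx} were engineered in their present form. Robustness in $\kappa$ is automatic here because every intermediate bound has already been stated with the element-wise weight $\kappa_{K}$ absorbed into the norms, and the stability constants $c_{1}, c_{2}$ in \eqref{eqn:bstab} are independent of $\kappa$ by hypothesis.
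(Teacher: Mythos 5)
Your proposal is correct and follows exactly the paper's own argument: the paper likewise verifies (\textbf{C0}) via Lemma~\ref{lm:smoother}, (\textbf{C1}) via \eqref{eqn:pistab}, and (\textbf{C2}) via the decomposition $v = v_0 + \Pi P v$ combined with \eqref{eqn:stab}, \eqref{eqn:approx}, and \eqref{eqn:smoother2}, before invoking Theorem~\ref{thm:aux0}. No discrepancies to report.
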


Note that in general, the intergrid operator defined by \eqref{eqn:pi} is not surjective. In fact, there is no surjective intergrid operator from $V_{h}^{c}$ to $V_{h}$ when $k\ge 2$. Hence, when $k\ge 2$ it is necessary to use the auxiliary space preconditioner.  However, when $k=1$ we have the following corollary for the fictitious space preconditioner $B_{{\rm fict}}$. 
\begin{corollary}
\label{cor:fict}
	When $k=1$, namely the linear VEM, the fictitious space preconditioner $B_{{\rm fict}} = \Pi A_{c}^{-1} \Pi^{t}$ satisfies: 
	\[
		\cK(B_{{\rm fict}}A) \le C,
	\]
	where the constant $C$ is independent of the mesh size and $\kappa$. 
\end{corollary}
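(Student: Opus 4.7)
The plan is to invoke the fictitious space preconditioner theorem of Nepomnyaschikh \cite{Nepomnyaschikh.S1992}, which states that $\cK(B_{\rm fict}A)$ is bounded by the product of the energy-continuity constant of $\Pi:V_h^c\to V_h$ and the energy-continuity constant of any right inverse of $\Pi$. Continuity of $\Pi$ is already \eqref{eqn:pistab} in Lemma~\ref{lem:pi}, valid for every $k$, so the only new ingredient required is an energy-stable right inverse of $\Pi$. This is exactly where the restriction $k=1$ will enter.

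For the right inverse I would take $Q:=P$, where $P$ is the local vertex interpolation operator from the proof of Lemma~\ref{lm:stabdecomp}; its energy stability $\|Pv\|_{A_c}\lesssim\|v\|_A$ is then immediate from \eqref{eqn:stab}. What genuinely requires $k=1$ is the identity $\Pi P v=v$ on $V_h$, which I would verify element by element. Fix $K\in\cT_h$ and $v\in V_h^K$. Because $k=1$, the definition of $V_h^K$ forces $\Delta v\in\P_{-1}(K)=\{0\}$, so $v$ is harmonic in $K$; moreover $v|_{\partial K}\in\B_1(\partial K)$ is piecewise linear along each edge of $K$ and hence completely determined by the values of $v$ at the vertices of $K$. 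The interpolant $Pv$ matches $v$ at every vertex of $K$ and is piecewise linear on the auxiliary triangulation, and since each edge of $K$ is an edge of that triangulation, this gives $(Pv)|_{\partial K}=v|_{\partial K}$. Comparing with the definition \eqref{eqn:pi} of $\Pi$, both $v$ and $\Pi(Pv)$ are harmonic in $K$ with identical Dirichlet data on $\partial K$, forcing $\Pi P v=v$ on $K$ and hence on $V_h$.

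Plugging continuity of $\Pi$, the identity $\Pi P=I$, and the energy stability of $P$ into the fictitious space theorem yields $\cK(B_{\rm fict}A)\le C$ with $C$ depending only on the constants in \eqref{eqn:pistab} and \eqref{eqn:stab}, hence independent of $h$ and $\kappa$. The main obstacle in the whole argument is precisely the identity $\Pi P=I$, and it is the sole place where $k=1$ is unavoidable: for $k\ge 2$ both the harmonic interior condition and the piecewise-linear boundary condition fail, so $\Pi$ ceases to be surjective and one is forced to retreat to the full auxiliary space framework of Theorem~\ref{thm:aux0} with a genuine smoother $\cR$, i.e.\ the preconditioner \eqref{eqn:Badd} rather than the purely fictitious one.
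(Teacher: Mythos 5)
Your proof is correct and follows essentially the same route as the paper: invoke the fictitious space lemma using the continuity bound \eqref{eqn:pistab} and the stable right inverse $P$ from \eqref{eqn:stab}. The paper simply asserts surjectivity of $\Pi$ for $k=1$, whereas you supply the (correct) verification that $\Pi P=I$ via harmonicity and the matching piecewise-linear boundary traces; this is a welcome elaboration of the step the paper leaves implicit.
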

\begin{proof}
	When $k=1$, the operator $\Pi$ defined in \eqref{eqn:pi} is surjective and satisfies \eqref{eqn:pistab} and \eqref{eqn:stab}. Therefore, the conclusion follows immediately by the fictitious space lemma (cf. \cite[Theorem 2.2]{Hiptmair.R;Xu.J2007}). 
\end{proof}

\section{Numerical Experiments}
\label{sec:num}
In this section, we present several numerical experiments in both 2D and 3D to verify the result in Theorem~\ref{thm:aux} on the performance of the proposed preconditioners. In all these tests, we use 2-sweeps symmetric Gauss-Seidel smoother. The stopping criteria is $\|r_{k}\| / \|r_{0}\| <10^{-12}$ for the PCG algorithm, where $r_{k}= f-Au_{k}$ is the residual. For the coarse solver, we use the AMG algorithm implemented in $i$FEM~\cite{Chen.L2008}. 

\subsection{2D Examples}
In the first example, we consider the model problem \eqref{eqn:model} in the unit square $\Omega = [0,1]^{2}$ with constant coefficient $\kappa =1$. Figure~\ref{fig:poly2d} is an example of the polytopal mesh of the unit square domain (with 100 elements) generated using \mcode{PolyMesher} \cite{Talischi.C;Paulino.G;Pereira.A;Menezes.I2012}, and Figure~\ref{fig:tri2d} is the corresponding Delaunay triangular mesh. The VEM discretization is defined on the polytopal mesh (cf. Figure~\ref{fig:poly2d}), while the auxiliary space using the standard conforming $\P_{1}$ finite element discretization is defined on the corresponding triangular mesh (cf. Figure~\ref{fig:tri2d}). 
\begin{figure}[htbp]
\centering
	\parbox{0.45\textwidth}{
       \includegraphics[width=0.4\textwidth]{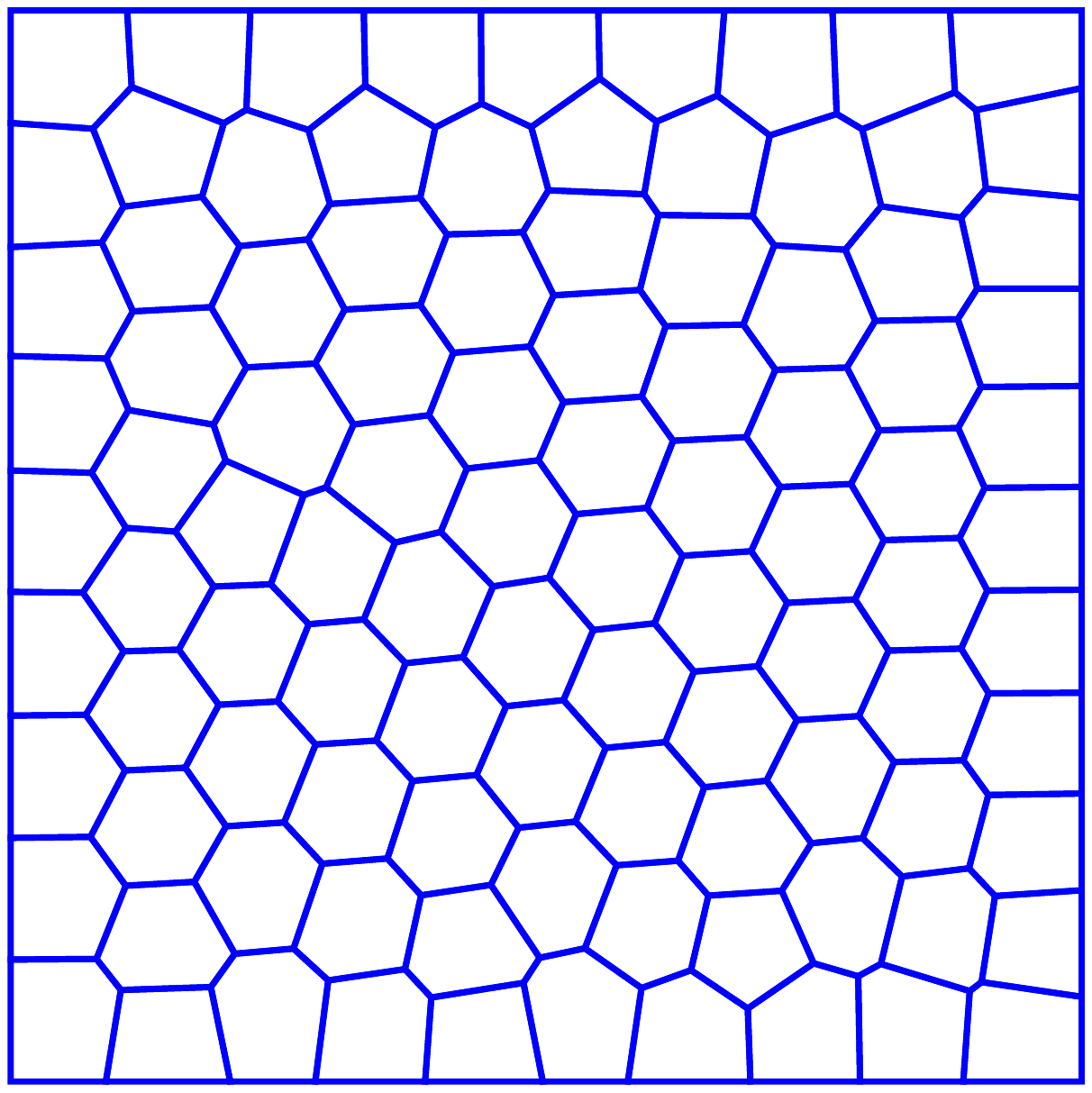}
       \caption{Polygonal Mesh $\cT_{h}$ of the Unit Square Domain (100 Elements)}
       \label{fig:poly2d}}
       \quad
       \begin{minipage}{0.45\textwidth}
       \includegraphics[width=0.89\textwidth]{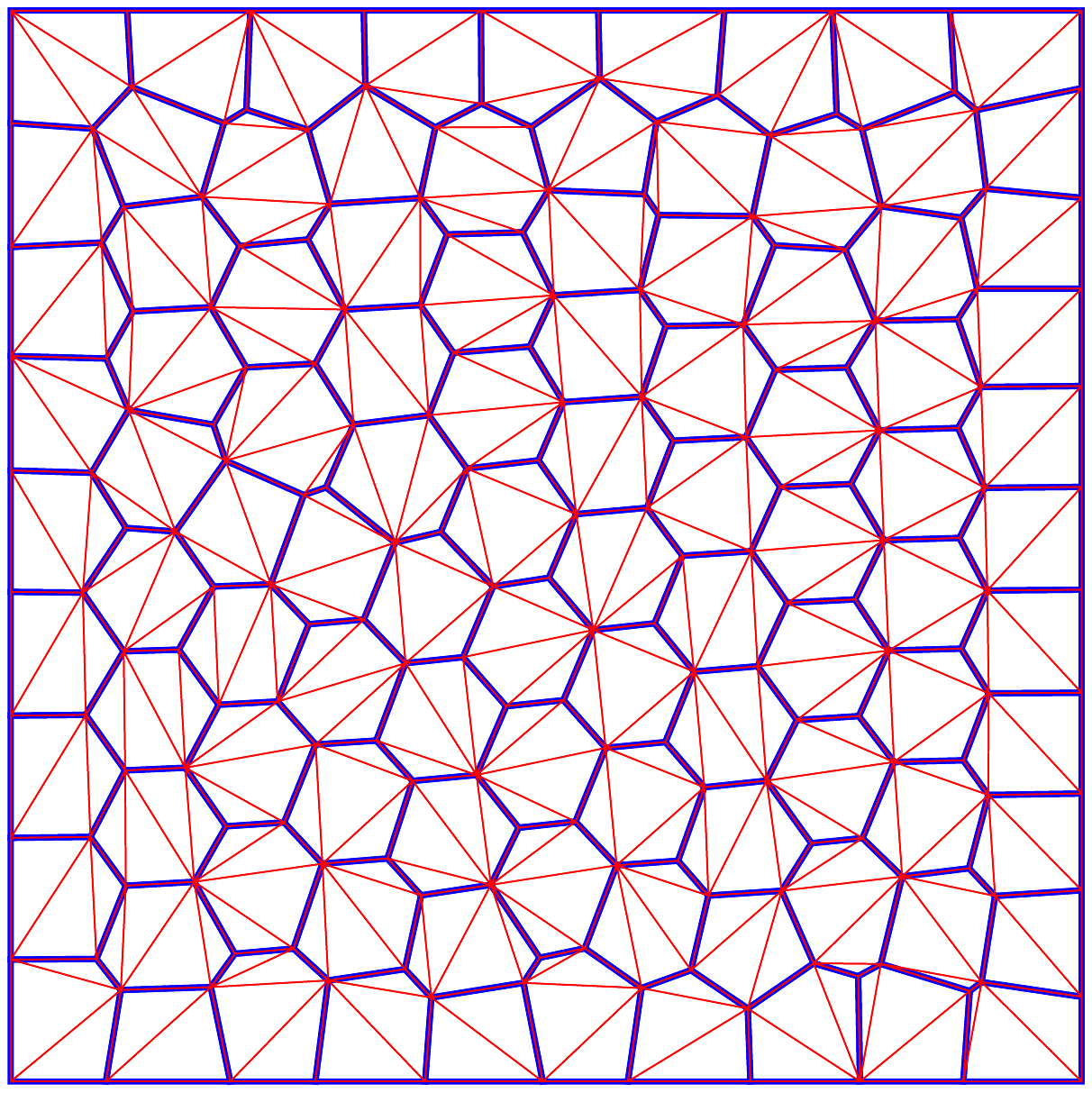}
       \caption{The Corresponding Delaunay Triangle Mesh $\cT_{h}^{c}$}
       \label{fig:tri2d}
       \end{minipage}
\end{figure}

Tables~\ref{tab:2d} shows the estimated condition numbers  (the number of PCG iterations) for the additive and multiplicative preconditioned systems. 
{\footnotesize
\begin{table}%[hbp]
%% CG iterations with two Level (with %% Using 0 RHS, and random initial guess. Reduce%% CG parameters: tol = 1e-7;  mxitr = 200; 1-GS smoother
\caption{Estimated condition numbers (number of PCG iterations) in 2D with constant coefficients.}
\begin{tabular}{c||c|c|c|c|c}
\hline
  \# Polytopal Elements & 10 & $10^{2}$   & $10^{3}$        & $10^{4}$        & $10^{5}$  
\\
%\cline{2-7}
%&  $h$   & $2^{-1}$  & $2^{-2}$   & $2^{-3}$  & $2^{-4}$  & $2^{-5}$\\
\hline\hline
 $\mathcal{K}(A)$ & 3.45 (9)&  3.86e1 (41)&  3.80e2 (117)&  3.88e3 (351)& 4.07e4 (1100) \\\hline
 $\mathcal{K}(B_{\rm sgs}A)$ & 1.07(6) & 3.78 (15) &3.20e1 (37) & 3.17e2 (104) & 3.17e3 (318)\\\hline
 $\mathcal{K}(B_{\rm fict}A)$& 2.92 (8) & 5.75 (26) & 7.53 (29) & 8.73 (32) & 9.67(36) \\\hline
 $\mathcal{K}(B_{\rm add}A)$ &  1.53 (9)&  1.71 (14)&  1.94 (14)&  1.99 (14)& 2.00 (13) \\\hline
 $\mathcal{K}(B_{\rm mul}A)$ &  1.06 (8)&  1.21 (10)&  1.04 (7)&  1.02 (6)& 1.02 (6) \\\hline
\end{tabular}
\label{tab:2d}
\end{table}
}
For comparison, we also include the estimated condition numbers $\cK(A)$, $\cK(B_{{\rm sgs}}A)$ and $\cK(B_{{\rm fict}}A)$, where $B_{{\rm sgs}}$ is the (2-sweep) symmetric Gauss-Seidel preconditioner (same below) and $B_{{\rm fict}}$ is the fictitious space preconditioner using the conforming FEM.  As we can observe from this table, while the condition numbers $\cK(A)$, $\cK(B_{{\rm sgs}}A)$ increase as the mesh is refined. The condition number $\cK(B_{{\rm fict}}A)$ increase slightly as the mesh is refined. 
On the other hand, the condition numbers of $\mathcal{K}(B_{{\rm add}}A)$ and $\mathcal{K}(B_{{\rm mul}}A)$ are uniformly bounded. 
%These observations verify the estimates given in Theorem~\ref{thm:aux}. 

In the second test, we consider the problem with jump coefficients. The coefficients $\kappa$ are generated randomly on each polygon element (see Figure~\ref{fig:jump2d} for an example of the coefficient distribution with 100 elements, the integer in each polygonal element is the magnitude of the coefficient.). 
%\begin{figure}[htbp]
%\centering
%	\parbox{0.45\textwidth}{
%       \includegraphics[width=0.45\textwidth]{../../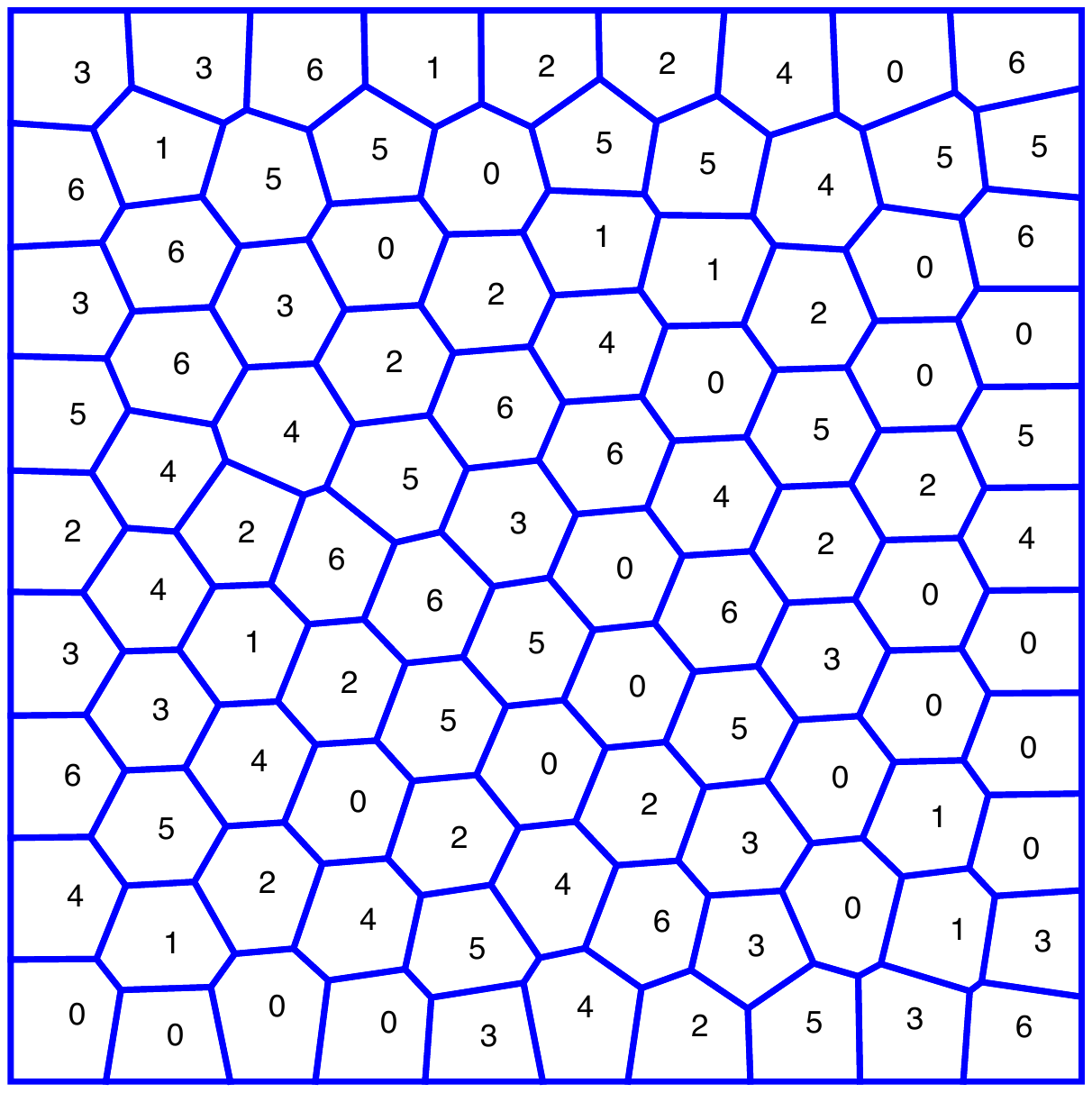}
%       \caption{ Random Jump Coefficients $10^{k}$ (100 Elements)}
%       \label{fig:jump2d}}
%       \quad
%       \begin{minipage}{0.45\textwidth}
%       \includegraphics[width=0.99\textwidth]{../../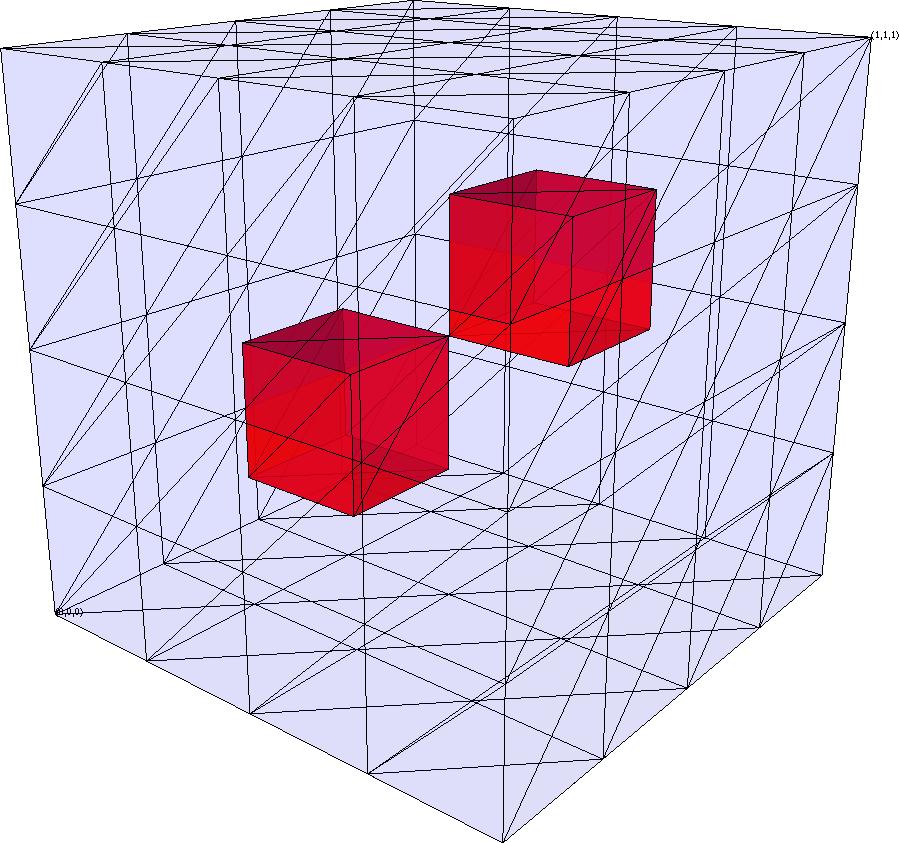}
%       \caption{3D uniform mesh with jump coefficients}
%       \label{fig:jump3d}
%       \end{minipage}
%\end{figure}

\begin{figure}[h]
%\sidecaption
\begin{center}
       \includegraphics[width=0.45\textwidth]{}
       \caption{Random Jump Coefficients $10^{k}$ (100 Elements)}
       \label{fig:jump2d}
\end{center}
 \end{figure}
Note that the coefficient settings are different in different polytopal meshes. Tables~\ref{tab:2djump} shows the estimated condition numbers (the number of PCG iterations). Here, ``-'' means the PCG algorithm failed to converge after 1200 iterations.  As we can see from this table, while $\cK(A)$ and $\cK(B_{{\rm sgs}}A)$ increase dramatically, the condition number $\mathcal{K}(B_{{\rm add}}A)$ and $\mathcal{K}(B_{{\rm mul}}A)$ are nearly uniformly bounded. These observations verify the estimate given in Theorem~\ref{thm:aux}. 
{\footnotesize
\begin{table}
%% CG iterations with two Level (with %% Using 0 RHS, and random initial guess. Reduce%% CG parameters: tol = 1e-7;  mxitr = 200; 1-GS smoother
\caption{Estimated condition numbers (number of PCG iterations) in 2D with jump coefficients.}
\begin{tabular}{c||c|c|c|c|c}
\hline
  \# Polytopal Elements & 10 & $10^{2}$   & $10^{3}$        & $10^{4}$        & $10^{5}$  
\\
%\cline{2-7}
%&  $h$   & $2^{-1}$  & $2^{-2}$   & $2^{-3}$  & $2^{-4}$  & $2^{-5}$\\
\hline\hline
 $\mathcal{K}(A)$                   & 2.44 (11)&  2.73e6 (578)&  - & - & - \\\hline
 $\mathcal{K}(B_{\rm sgs}A)$ & 1.18(5) & 3.90e2 (26) &3.93e3 (409) & - & -\\\hline
 $\mathcal{K}(B_{\rm fict}A)$& 3.27 (8) & 6.94 (33) & 6.42 (36) & 11.6 (44) & 13.6 (53) \\\hline
 $\mathcal{K}(B_{\rm add}A)$ &  1.54 (9)&  3.51 (20)&  3.60 (25)&  3.67 (25)& 3.80 (26) \\\hline
 $\mathcal{K}(B_{\rm mul}A)$ &  1.06 (6)&  1.74 (15)&  1.82 (16)&  1.84 (16)& 1.88 (17) \\\hline
\end{tabular}
\label{tab:2djump}
\end{table}
}

In the third test, we consider the performance of the preconditioners for voronoi meshes which violate the assumption ({\bf A}) (see Figure~\ref{fig:2dvoronoi} for an example of 100 polygons). As we can observe from this figure, the aspect ratio for some polygons are quite high -- thus the partition is no longer quasi-uniform. Similar to before, we use Delaunay triangulation of this mesh to construct the auxiliary space.
\begin{figure}[h]
%\sidecaption
\begin{center}
       \includegraphics[width=0.45\textwidth]{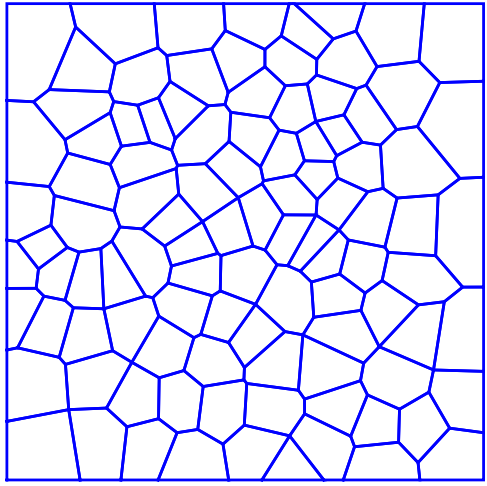}
       \caption{Voronoi mesh (100 Elements)}
       \label{fig:2dvoronoi}
\end{center}
 \end{figure}
 
 Table~\ref{tab:2dvoronoi} shows the estimated condition numbers with the number of PCG iterations for different preconditioners. As we can see from this table, both additive and multiplicative auxiliary space preconditioners are still robust with respect to the problem size. 
{\footnotesize
\begin{table}
%% CG iterations with two Level (with %% Using 0 RHS, and random initial guess. Reduce%% CG parameters: tol = 1e-7;  mxitr = 200; 1-GS smoother
\caption{Estimated condition numbers (number of PCG iterations) in 2D voronoi polygonal mesh.}
\begin{tabular}{c||c|c|c|c|c}
\hline
  \# Polytopal Elements & 10 & $10^{2}$   & $10^{3}$        & $10^{4}$        & $10^{5}$  
\\
%\cline{2-7}
%&  $h$   & $2^{-1}$  & $2^{-2}$   & $2^{-3}$  & $2^{-4}$  & $2^{-5}$\\
\hline\hline
$\mathcal{K}(A)$& 4.76 (9) & 6.89e1 (52) & 6.59e2 (171) & 6.49e3 (537) & - \\\hline
$\mathcal{K}(B_{\rm sgs}A)$& 1.13 (6) & 4.93 (17) & 3.81e1 (45) & 3.57e2 (134) & 3.40e3 (400) \\\hline
$\mathcal{K}(B_{\rm fict}A)$& 4.66 (9) & 7.92 (34) & 2.04e1 (43) & 2.32e1 (46) & 1.62e1 (52) \\\hline
$\mathcal{K}(B_{\rm add}A)$& 1.58 (9) & 1.72 (16) & 3.09 (18) & 3.16 (19) & 1.91 (17) \\\hline
$\mathcal{K}(B_{\rm mul}A)$& 1.32 (11) & 2.25 (16) & 1.48 (13) & 1.29 (12) & 1.14 (10) \\\hline
\end{tabular}
\label{tab:2dvoronoi}
\end{table}
}

\subsection{3D Example}
Now we consider the model problem \eqref{eqn:model} in a 3D cubic domain $\Omega =[0,1]^{3}$. We subdivide the domain into hexagonal elements (cubes) with mesh size $h$ at each level. The VEM discretization is defined on the hexagon mesh. For the auxiliary space, we further divide each hexagon into six tetrahedrons to construct the auxiliary mesh and to define the $\P_{1}$ conforming finite element discretization on this auxiliary mesh (see for example, Figure~\ref{fig:jump3d}).

%In the first test, we consider the constant coefficient case $\kappa \equiv 1.$ Table~\ref{tab:3d} demonstrates the estimated condition numbers with the corresponding number of PCG iterations. As we can see from the table, the auxiliary space preconditioners are very efficient and robust with respect to the mesh size. 
%{\footnotesize
%\begin{table}%[hbp]
%%% CG iterations with two Level (with %% Using 0 RHS, and random initial guess. Reduce%% CG parameters: tol = 1e-7;  mxitr = 200; 1-GS smoother
%\caption{Estimated condition numbers (number of PCG iterations) in 3D with $\kappa \equiv 1$.}
%\begin{tabular}{c||c|c|c|c|c|c}
%\hline
%  $h$ & $2^{-2}$ & $2^{-3}$   & $2^{-4}$        & $2^{-5}$       & $2^{-6}$ & $2^{-7}$
%\\
%%\cline{2-7}
%%&  $h$   & $2^{-1}$  & $2^{-2}$   & $2^{-3}$  & $2^{-4}$  & $2^{-5}$\\
%\hline\hline
% $\mathcal{K}(A)$ & 4.44 (6)&  1.74e1 (21)&  6.94e1 (40)&  5.54e2 (79) & 4.43e3 (153) & 3.54e4 (303) \\\hline
% $\mathcal{K}(B_{\rm sgs}A)$ & 1.10 (7) & 2.11 (12) &6.54 (21) & 2.44e1 (35)& 9.57e1 (62) & 3.81e2 (115)\\\hline
% $\mathcal{K}(B_{\rm add}A)$ &  1.14 (8)&  1.36 (10)&  1.73 (13)&  1.92 (14)& 1.98 (14)  & 1.99 (13)\\\hline
% $\mathcal{K}(B_{\rm mul}A)$ &  1.00 (5)&  1.00 (5)&  1.00 (5)&  1.00 (4) &1.00 (4) & 1.00 (3)\\\hline
%\end{tabular}
%\label{tab:3d}
%\end{table}
%}

In this example, we test various discontinuous coefficient settings. Let $\Omega_{1} =[0.25,0.5]^{3}$ and $\Omega_{2} = [0.5,0.75]^{3}$ (see Figure~\ref{fig:jump3d}). We set the coefficient $\kappa|_{\Omega_{1}\cup\Omega_{2}}= \kappa_{1}= 10^{k}$ (with $k=-6, -4, -2, 0, 2, 4, 6$) and $\kappa|_{\Omega\setminus (\Omega_{1}\cup\Omega_{2})} = 1$.
\begin{figure}[h]
%\sidecaption
\begin{center}
       \includegraphics[width=0.45\textwidth]{}
       \caption{{\footnotesize 3D uniform mesh with jump coefficients}}
       \label{fig:jump3d}
\end{center}
 \end{figure}
Table~\ref{tab:3d} presents the estimated condition number of the preconditioned systems with respect to different choice of $\kappa_{1}$ and mesh size.
%{\footnotesize
%\begin{table}%[hbp]
%%% CG iterations with two Level (with %% Using 0 RHS, and random initial guess. Reduce%% CG parameters: tol = 1e-7;  mxitr = 200; 1-GS smoother
%\caption{Estimated condition numbers (number of PCG iterations) in 3D with $\mathcal{J}(\kappa)=10^{6}$.}
%\begin{tabular}{c||c|c|c|c|c|c}
%\hline
%  $h$ & $2^{-2}$ & $2^{-3}$   & $2^{-4}$        & $2^{-5}$       & $2^{-6}$ & $2^{-7}$
%\\
%%\cline{2-7}
%%&  $h$   & $2^{-1}$  & $2^{-2}$   & $2^{-3}$  & $2^{-4}$  & $2^{-5}$\\
%\hline\hline
% $\mathcal{K}(A)$ & 1.14e6 (8)&  8.76e6 (29)&  6.94e7 (56)&  5.54e8 (111) &4.43e9 (216) & 3.54e10 (424)  \\\hline
% $\mathcal{K}(B_{\rm sgs}A)$ & 1.10 (7) & 2.11 (12) &6.54 (23) & 2.44e1 (43)& 9.57e1 (71)  & 3.81e2 (118)\\\hline
% $\mathcal{K}(B_{\rm add}A)$ &  1.14 (10)&  1.35 (11)&  1.73 (15)&  1.92 (17)& 1.98 (17) & 1.99 (16) \\\hline
% $\mathcal{K}(B_{\rm mul}A)$ &  1.00 (5)&  1.00 (5)&  1.00 (5)&  1.00 (5) &1.00 (4) & 1.00 (4) \\\hline
%\end{tabular}
%\label{tab:jump3d}
%\end{table}
%}
{\scriptsize
\begin{table}
\caption{Estimated condition numbers (number of PCG iterations) in 3D. The coefficient $\kappa|_{\Omega_{1}\cup\Omega_{2}} =\kappa_{1}= 10^{k}$ for various choices of $k$, and $\kappa|_{\Omega\setminus(\Omega_{1}\cup \Omega_{2})} =1.$ }
\begin{center}\begin{tabular}{c|c||c|c|c|c|c|c}
\hline
 $\kappa_{1}$ & $ h$ & $2^{-2}$ &  $2^{-3}$ &  $2^{-4}$ &  $2^{-5}$ &  $2^{-6}$ &  $2^{-7}$ \\
\hline\hline
\multirow{5}{*}{$10^{-6}$} & $\mathcal{K}(A)$& 1.15e6 (8) & 8.76e6 (28) & 6.94e7 (56) & 5.54e8 (110) & 4.43e9 (215) & 3.54e10 (420) \\
	&$\mathcal{K}(B_{\rm sgs}A)$& 1.10 (7) & 2.11 (12) & 6.54 (23) & 2.44e1 (43) & 9.57e1 (71) & 3.81e2 (118) \\
	&$\mathcal{K}(B_{\rm fict}A)$& 1.32 (6) & 1.44 (11) & 1.41 (10) & 1.39 (9) & 1.37 (8) & 1.33 (7) \\
	&$\mathcal{K}(B_{\rm add}A)$& 1.14 (10) & 1.35 (11) & 1.73 (15) & 1.92 (17) & 1.98 (17) & 1.99 (16) \\
	&$\mathcal{K}(B_{\rm mul}A)$& 1.00 (5) & 1.00 (5) & 1.00 (5) & 1.00 (5) & 1.00 (4) & 1.00 (4) \\\hline\hline
\multirow{5}{*}{$10^{-4}$} & $\mathcal{K}(A)$& 1.15e4 (7) & 8.76e4 (26) & 6.94e5 (51) & 5.54e6 (99) & 4.43e7 (194) & 3.54e8 (379) \\
	&$\mathcal{K}(B_{\rm sgs}A)$& 1.10 (7) & 2.11 (12) & 6.54 (23) & 2.44e1 (38) & 9.57e1 (63) & 3.81e2 (115) \\
	&$\mathcal{K}(B_{\rm fict}A)$& 1.32 (6) & 1.44 (11) & 1.41 (10) & 1.39 (9) & 1.37 (8) & 1.33 (7) \\
	&$\mathcal{K}(B_{\rm add}A)$& 1.14 (9) & 1.35 (11) & 1.73 (14) & 1.92 (15) & 1.98 (15) & 1.99 (13) \\
	&$\mathcal{K}(B_{\rm mul}A)$& 1.00 (5) & 1.00 (5) & 1.00 (5) & 1.00 (5) & 1.00 (4) & 1.00 (4) \\\hline\hline
\multirow{5}{*}{$10^{-2}$} & $\mathcal{K}(A)$& 1.15e2 (7) & 8.76e2 (24) & 6.94e3 (46) & 5.54e4 (90) & 4.43e5 (175) & 3.54e6 (346) \\
	&$\mathcal{K}(B_{\rm sgs}A)$& 1.10 (7) & 2.11 (12) & 6.54 (21) & 2.44e1 (35) & 9.57e1 (62) & 3.81e2 (115) \\
	&$\mathcal{K}(B_{\rm fict}A)$& 1.32 (6) & 1.45 (11) & 1.41 (10) & 1.39 (9) & 1.37 (8) & 1.28 (6) \\
	&$\mathcal{K}(B_{\rm add}A)$& 1.14 (8) & 1.36 (10) & 1.73 (13) & 1.92 (14) & 1.98 (14) & 1.99 (13) \\
	&$\mathcal{K}(B_{\rm mul}A)$& 1.00 (5) & 1.00 (5) & 1.00 (5) & 1.00 (4) & 1.00 (4) & 1.00 (4) \\\hline\hline
\multirow{5}{*}{1}  & $\mathcal{K}(A)$& 4.44 (6) & 1.74e1 (21) & 6.94e1 (40) & 5.54e2 (78) & 4.43e3 (153) & 3.54e4 (302)\\
	&$\mathcal{K}(B_{\rm sgs}A)$& 1.10 (7) & 2.11 (12) & 6.54 (21) & 2.44e1 (35) & 9.57e1 (62) & 3.81e2 (115) \\
	&$\mathcal{K}(B_{\rm fict}A)$& 1.32 (6) & 1.45 (11) & 1.41 (10) & 1.39 (9) & 1.33 (7) & 1.28 (6) \\
	&$\mathcal{K}(B_{\rm add}A)$& 1.14 (8) & 1.36 (10) & 1.73 (13) & 1.92 (14) & 1.98 (14) & 1.99 (13) \\
	&$\mathcal{K}(B_{\rm mul}A)$& 1.00 (5) & 1.00 (5) & 1.00 (5) & 1.00 (4) & 1.00 (4) & 1.00 (3) \\\hline\hline
\multirow{5}{*}{$10^{2}$} & $\mathcal{K}(A)$& 3.88e2 (6) & 2.00e2 (22) & 9.98e1 (44) & 2.77e2 (80) & 1.11e3 (143) & 4.43e3 (273) \\
	&$\mathcal{K}(B_{\rm sgs}A)$& 1.10 (7) & 2.11 (12) & 6.54 (21) & 2.44e1 (35) & 9.57e1 (62) & 3.81e2 (115) \\
	&$\mathcal{K}(B_{\rm fict}A)$& 1.32 (6) & 1.45 (11) & 1.41 (10) & 1.39 (9) & 1.33 (7) & 1.28 (6) \\
	&$\mathcal{K}(B_{\rm add}A)$& 1.14 (8) & 1.36 (10) & 1.73 (13) & 1.92 (14) & 1.98 (14) & 1.99 (13) \\
	&$\mathcal{K}(B_{\rm mul}A)$& 1.00 (5) & 1.00 (5) & 1.00 (5) & 1.00 (4) & 1.00 (4) & 1.00 (3) \\\hline\hline
\multirow{5}{*}{$10^{4}$} & $\mathcal{K}(A)$& 3.88e4 (6) & 2.00e4 (22) & 9.98e3 (47) & 5.00e3 (89) & 2.50e3 (163) & 4.43e3 (295) \\
	&$\mathcal{K}(B_{\rm sgs}A)$& 1.10 (7) & 2.11 (12) & 6.54 (21) & 2.44e1 (35) & 9.57e1 (62) & 3.81e2 (115) \\
	&$\mathcal{K}(B_{\rm fict}A)$& 1.32 (6) & 1.45 (11) & 1.41 (10) & 1.39 (9) & 1.33 (7) & 1.28 (6) \\
	&$\mathcal{K}(B_{\rm add}A)$& 1.14 (8) & 1.36 (10) & 1.73 (13) & 1.92 (14) & 1.98 (14) & 1.99 (13) \\
	&$\mathcal{K}(B_{\rm mul}A)$& 1.00 (5) & 1.00 (5) & 1.00 (5) & 1.00 (4) & 1.00 (4) & 1.00 (3) \\\hline\hline
\multirow{5}{*}{$10^{6}$} & $\mathcal{K}(A)$ & 3.88e6 (9) & 2.00e6 (22) & 9.99e5 (51) & 5.00e5 (96) & 2.50e5 (180) & 1.25e5 (331) \\
	&$\mathcal{K}(B_{\rm sgs}A)$& 1.10 (7) & 2.11 (12) & 6.54 (21) & 2.44e1 (35) & 9.57e1 (62) & 3.81e2 (115) \\
	&$\mathcal{K}(B_{\rm fict}A)$& 1.32 (6) & 1.45 (11) & 1.41 (10) & 1.39 (9) & 1.33 (7) & 1.28 (6) \\
	&$\mathcal{K}(B_{\rm add}A)$ & 1.14 (8) & 1.36 (10) & 1.73 (13) & 1.92 (14) & 1.98 (14) & 1.99 (13) \\
	&$\mathcal{K}(B_{\rm mul}A)$ & 1.00 (5) & 1.00 (5) & 1.00 (5) & 1.00 (4) & 1.00 (4) & 1.00 (3) \\\hline
\hline
\end{tabular}
\label{tab:3d}
\end{center}
\end{table}
}
As we can see from Table~\ref{tab:3d}, the condition number of $A$ depends on both the coefficient $\kappa$ and the mesh size. On the other hand, both the fictitious space preconditioner and the auxiliary space preconditioners (additive or multiplicative) are efficient and robust with respect to jumps in the coefficient $\kappa$ and the mesh size. These results justify Theorem~\ref{thm:aux} and Corollary~\ref{cor:fict}.

\section{Conclusion}
\label{sec:con}
In this paper, we developed the additive and multiplicative auxiliary space preconditioners for solving the linear system arising from the virtual element methods discretization on polytopal meshes for the second order elliptic equation with jump coefficients. We used an auxiliary simplicial triangulation to construct the coarse space. The auxiliary space preconditioners consist of a smoother and a coarse space correction. We showed that the condition numbers of the preconditioned systems are uniformly bounded, independent of the problem size and the jump in coefficients. For the linear VEM discretization, we also showed that the fictitious space preconditioner is also a robust and efficient preconditioner. Numerical experiments were presented to demonstrate the performance of these preconditioners.

In the analysis, we assumed that the polytopal mesh should satisfy ({\bf A}), that is, there exist a quasi-uniform auxiliary triangulation. However, the numerical experiments demonstrated that these preconditioners also perform uniformly for voronoi meshes which violate the quasi-uniform assumption. This indicates that we might be able to relax the quasi-uniformity condition. This is a topic of ongoing research. 

\section*{Acknowledgement}
This work is supported  in part by  NSF DMS 1319110. 

\bibliographystyle{abbrv}
\bibliography{../zhuLibrary/books,../zhuLibrary/library}

\begin{thebibliography}{10}

\bibitem{Antonietti.P;Veiga.L;Scacchi.S;Verani.M2016}
P.~F. Antonietti, L.~B. da~Veiga, S.~Scacchi, and M.~Verani.
\newblock A {$C^1$} virtual element method for the {Cahn--Hilliard} equation
  with polygonal meshes.
\newblock {\em SIAM Journal on Numerical Analysis}, 54(1):34--56, 2016.

\bibitem{Antonietti.P;Mascotto.L;Verani.M2017}
P.~F. Antonietti, L.~Mascotto, and M.~Verani.
\newblock A multigrid algorithm for the $ p $-version of the virtual element
  method.
\newblock {\em arXiv preprint arXiv:1703.02285}, 2017.

\bibitem{Beirao-da-Veiga.L;Brezzi.F;Cangiani.A;Manzini.G2013}
L.~Beir{\~a}o~da Veiga, F.~Brezzi, A.~Cangiani, G.~Manzini, L.~D. Marini, and
  A.~Russo.
\newblock Basic principles of virtual element methods.
\newblock {\em Mathematical Models and Methods in Applied Sciences},
  23(01):199--214, 2013.

\bibitem{Beirao-da-Veiga.L;Brezzi.F;Marini.L;Russo.A2014}
L.~Beir{\~a}o~da Veiga, F.~Brezzi, L.~D. Marini, and A.~Russo.
\newblock The hitchhiker's guide to the virtual element method.
\newblock {\em Mathematical Models and Methods in Applied Sciences},
  24(08):1541--1573, 2014.

\bibitem{Beirao-da-Veiga.L;Brezzi.F;Marini.L;Russo.A2016}
L.~Beir{\~a}o~da Veiga, F.~Brezzi, L.~D. Marini, and A.~Russo.
\newblock Virtual element method for general second-order elliptic problems on
  polygonal meshes.
\newblock {\em Mathematical Models and Methods in Applied Sciences},
  26(04):729--750, 2016.

\bibitem{Bertoluzza.S;Pennacchio.M;Prada.D2017}
S.~Bertoluzza, M.~Pennacchio, and D.~Prada.
\newblock {BDDC} and {FETI-DP} for the virtual element method.
\newblock {\em Calcolo}, 54(4):1565--1593, Dec 2017.

\bibitem{Bramble.J;Xu.J1991}
J.~H. Bramble and J.~Xu.
\newblock Some estimates for a weighted ${L}^2$ projection.
\newblock {\em Mathematics of Computation}, 56:463--476, 1991.

\bibitem{Brenner.S;Guan.Q;Sung.L2017}
S.~C. Brenner, Q.~Guan, and L.-Y. Sung.
\newblock Some estimates for virtual element methods.
\newblock {\em Computational Methods in Applied Mathematics}, 17(4):553--574,
  2017.

\bibitem{Cangiani.A;Chatzipantelidis.P;Diwan.G;Georgoulis.E2017}
A.~Cangiani, P.~Chatzipantelidis, G.~Diwan, and E.~H. Georgoulis.
\newblock Virtual element method for quasilinear elliptic problems.
\newblock {\em arXiv preprint arXiv:1707.01592}, 2017.

\bibitem{Chen.L2008}
L.~Chen.
\newblock {$i$FEM}: an integrate finite element methods package in {MATLAB}.
\newblock Technical report, University of California at Irvine, 2009.

\bibitem{Chen.L;Huang.J2018}
L.~Chen and J.~Huang.
\newblock Some error analysis on virtual element methods.
\newblock {\em Calcolo}, 55(1):5, Feb 2018.

\bibitem{Chen.L;Wang.J;Wang.Y;Ye.X2015}
L.~Chen, J.~Wang, Y.~Wang, and X.~Ye.
\newblock An auxiliary space multigrid preconditioner for the weak {G}alerkin
  method.
\newblock {\em Computers \& Mathematics with Applications}, 70(4):330 -- 344,
  2015.

\bibitem{Chen.L;Wei.H;Wen.M2017}
L.~Chen, H.~Wei, and M.~Wen.
\newblock An interface-fitted mesh generator and virtual element methods for
  elliptic interface problems.
\newblock {\em Journal of Computational Physics}, 334:327 -- 348, 2017.

\bibitem{Chi.H;Veiga.L;Paulino.G2017}
H.~Chi, L.~B. da~Veiga, and G.~Paulino.
\newblock Some basic formulations of the virtual element method (vem) for
  finite deformations.
\newblock {\em Computer Methods in Applied Mechanics and Engineering}, 318:148
  -- 192, 2017.

\bibitem{Hiptmair.R;Xu.J2007}
R.~Hiptmair and J.~Xu.
\newblock {Nodal Auxiliary Space Preconditioning in {$H({\rm curl})$} and
  {$H({\rm div})$} Spaces}.
\newblock {\em SIAM Journal on Numerical Analysis}, 45:2483--2509, 2007.

\bibitem{Nepomnyaschikh.S1992}
S.~V. Nepomnyaschikh.
\newblock Decomposition and fictitious domains methods for elliptic boundary
  value problems.
\newblock In {\em Fifth International Symposium on Domain Decomposition Methods
  for Partial Differential Equations (Norfolk, VA, 1991)}, pages 62--72. SIAM,
  Philadelphia, PA, 1992.

\bibitem{Sutton.O2017}
O.~J. Sutton.
\newblock The virtual element method in 50 lines of {MATLAB}.
\newblock {\em Numerical Algorithms}, 75(4):1141--1159, Aug 2017.

\bibitem{Talischi.C;Paulino.G;Pereira.A;Menezes.I2012}
C.~Talischi, G.~H. Paulino, A.~Pereira, and I.~F.~M. Menezes.
\newblock Polymesher: a general-purpose mesh generator for polygonal elements
  written in matlab.
\newblock {\em Structural and Multidisciplinary Optimization}, 45(3):309--328,
  Mar 2012.

\bibitem{Vassilevski.P2008}
P.~S. Vassilevski.
\newblock {\em Multilevel block factorization preconditioners}.
\newblock Springer, New York, 2008.
\newblock Matrix-based analysis and algorithms for solving finite element
  equations.

\bibitem{Xu.J1996}
J.~Xu.
\newblock The auxiliary space method and optimal multigrid preconditioning
  techniques for unstructured meshes.
\newblock {\em Computing}, 56:215--235, 1996.

\bibitem{Xu.J;Zhu.Y2008}
J.~Xu and Y.~Zhu.
\newblock Uniform convergent multigrid methods for elliptic problems with
  strongly discontinuous coefficients.
\newblock {\em Mathematical Models and Methods in Applied Science}, 18(1):77
  --105, 2008.

\bibitem{Zhu.Y2008}
Y.~Zhu.
\newblock Domain decomposition preconditioners for elliptic equations with jump
  coefficients.
\newblock {\em Numerical Linear Algebra with Applications}, 15(2-3):271--289,
  2008.

\bibitem{Zikatanov.L2008}
L.~Zikatanov.
\newblock {Two-sided bounds on the convergence rate of two-level methods}.
\newblock {\em Numerical Linear Algebra with Applications}, 15(5):439 -- 454,
  2008.

\end{thebibliography}

%\section{Appendix}
%Another approach to design multigrid for VEM is based on the agglomerated polytopal mesh. There are a couple of difficulties in this approach:
%\begin{enumerate}
%	\item The VEM spaces designed in this approach is nonnested. We should use the framework developed in \cite{Bramble.J;Pasciak.J;Xu.J1991}. Based on this framework, the convergence analysis may depend on the regularity of the solution. 
%	\item In practical implementation of this algorithm, we need to keep track of the mesh structure. So the underline data structure may be complicated unless there are some smart way to do it. 
%\end{enumerate}
%
%Some possible extensions: 
%\begin{enumerate}
%	\item What is a good stabilization term for the discontinuous coefficients -- robustness w.r.t. the coefficients?
%	\item Extension ot 3D problem.
%	\item Robust solver for the jump coefficients problem.
%\end{enumerate}

\end{document}